\newcommand{\diam}{\textrm{diam }}
\DeclareMathOperator{\Var}{Var}
\renewcommand{\S}{\mathcal{S}}
\newtheorem*{rep@theorem}{\rep@title}
\newcommand{\newreptheorem}[2]{%
\newenvironment{rep#1}[1]{%
 \def\rep@title{#2 \ref{##1}}%
 \begin{rep@theorem}}%
 {\end{rep@theorem}}}
\newtheorem{theorem}[equation]{Theorem}
\newtheorem{lemma}[equation]{Lemma}
\newtheorem{remark}[equation]{Remark}
\newtheorem{definition}[equation]{Definition}
\newtheorem{Definition}[equation]{Definition}
\newtheorem{Proposition}[equation]{Proposition}
\newtheorem{Corollary}[equation]{Corollary}
\theoremstyle{definition}
\newtheorem{Remark}[equation]{Remark}
\title{Shuffling large decks of cards and the Bernoulli-Laplace urn model}
\author{
  Nestoridi, Evita
  \and
  White, Graham
}
\date{\today}
\begin{document}

\maketitle

\tableofcontents

\section{Introduction}
\label{sec:intro}

People are often required to shuffle cards by hand. In many cases, this involves a relatively small deck of between for example 20 and 78 cards. Sometimes, however, it is required to randomise a larger deck of cards. This occurs in casinos, where games such as blackjack are played using eight standard decks of cards to reduce the effectiveness of card counting. It also occurs in some designer card and board games, such as Ticket to Ride or Race for the Galaxy. In such games, a deck of between 100 and 250 cards may be used. It is easy to randomise small decks of cards via riffle shuffles (\cite{Bayer}). Riffle shuffles would still be effective for larger decks of cards, but may be more difficult to implement, because they require manipulating two halves of the deck, each held in one hand. We will discuss schemes for shuffling a large deck of cards by manipulating fewer cards at a time. Similar considerations may also occur in cryptography problems, such as those discussed in \cite{Defense}.

The standard casino method for shuffling a deck of $N$ cards is to cut it into $p$ piles, shuffle each pile, recombine them, and then do a deterministic step which moves cards between the sections of the deck corresponding to the various piles, such as cutting $k$ cards from the top of the deck to the bottom. This entire process is repeated several times. This paper gives good bounds on the rates of convergence for these schemes, under the assumption that at each step, the individual piles are perfectly shuffled. At the same time, it turns out that the necessary analysis also solves a classical problem, the Bernoulli-Laplace model with $p$ urns, for even $p$.

\subsection{The Bernoulli-Laplace urn problem and its early history}
Daniel Bernoulli (1770) studied many different urn problems, including some that we consider. For instance, he considered a three-urn problem where the first urn contains $n$ white tickets, the second $n$ black tickets and the third $n$ red tickets. A ticket is drawn at random from the first urn, and then placed into the second. Then a ticket is picked from the second urn and placed into the third. Finally a ticket is drawn from the third urn and deposited in the first. Bernoulli found the expression for the expected number of white tickets in each urn after this process is repeated multiple times (\cite{D.Bernoulli_three}). A summary of Bernoulli's work can be found in Todhunter \cite{Todhunter} (page 231). An English translation of the Latin can be found at \cite{Bernoulli_site}, along with commentary \cite{Pulskamp}.

The following problem was introduced by Bernoulli, but studied in depth by Laplace (see page $378$ of \cite{Feller}). Consider two urns, each containing $n$ balls, with $n$ of the balls colored white and $n$ red. Draw one ball at random from each urn, and exchange them. Laplace was interested in finding the distribution of the number of white balls in the first urn after $r$ steps and solved this problem, introducing ``the first differential equation of the type now known from the probabilistic theory of diffusion processes'', as explained by Jacobson in \cite{MJ}.  Some authors objected to Laplace's methods and level of rigour --- for more details see \cite{MJ}. 

Later, Markov \cite{Markov} constructed the first Markov chain using a more general model. He considered the Markov chain where one draws a ball from each of two urns, the first having $a$ balls, the second having $b$ balls among which $p(a+b)$ are white and $q(a+b)$ are red, with $0<p<1$ and $q=1-p$, and found that the stationary distribution is hypergeometric. For $a=b$ and $p=q=\frac{1}{2}$ this is the classical Bernoulli-Laplace problem. In this paper, we are concerned with the mixing time of a generalisation of the Bernoulli-Laplace problem. We consider $2p$ urns, each containing $n$ balls. At each step, $k$ balls are chosen at random from each urn and moved to the next urn. Initially, each urn contains $n$ balls of the same colour, a different colour for each urn.

\subsection{A card shuffling model}

Our results are easiest to describe in the case of two piles. A more careful description of the procedure follows.

\begin{Definition}
\label{def:twopile}
A straightforward approach to shuffling a deck of $2n$ cards is to cut the deck into two piles, perfectly shuffle each independently and then stack the two piles atop one another. We then move a fixed number $k$ of cards from the top of the deck to the bottom and repeat this entire procedure. We will refer to this shuffling scheme as the \emph{two pile, $k$--cut shuffle}. See Figure \ref{fig:twopile} for an illustration of this procedure.
\end{Definition}

\begin{figure}[ht!]
\centering
\includegraphics[scale=0.4]{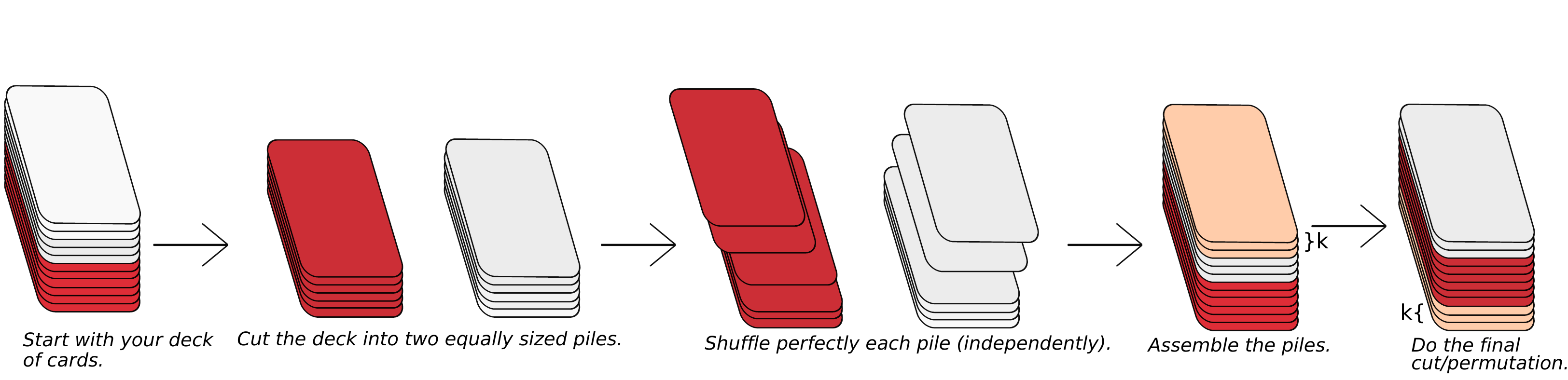}
\caption{A two pile, $k$--cut shuffle.}
\label{fig:twopile}
\end{figure}

One might expect that if this shuffle is repeatedly applied to a deck of cards, then the deck will eventually become well shuffled. That is, that it would be very close to equally likely to be in any of the $(2n)!$ possible configurations. We will analyse this algorithm, and determine how many iterations are required for the deck to become close to `random'.

Our scheme defines a random walk on the symmetric group $S_{2n}$. However, the first step is to perfectly shuffle the top $n$ cards of the deck. Henceforth, any of the $n!$ arrangements of those cards will be equally likely. Similarly, the other $n$ cards are perfectly shuffled. Therefore, we need not track the values of the cards, just which were originally in the top $n$ positions and which in the bottom $n$. To encode this information, colour the top $n$ cards black and the bottom $n$ cards red. 

The final step, of moving $k$ cards from the top of the deck to the bottom, is the same as taking $k$ cards from the top half of the deck, and putting them in the bottom half, and simultaneously choosing $k$ cards from the bottom half of the deck, and moving them to the top half. It doesn't matter which positions these cards were in within the halves, because each half was perfectly shuffled, and it doesn't matter which positions the cards are placed in, because each half is about to be perfectly shuffled again. All that matters is which card is in which half of the deck. 

We have rephrased our scenario as a Bernoulli-Laplace model. That is, we have two urns (halves of the deck), each containing $n$ balls, some red and some black. At each step, $k$ balls are chosen from each urn, and simultaneously moved to the other urn. This model is illustrated in Figure \ref{fig:BL}.

\begin{figure}[ht!]
\centering
\includegraphics[scale=0.7]{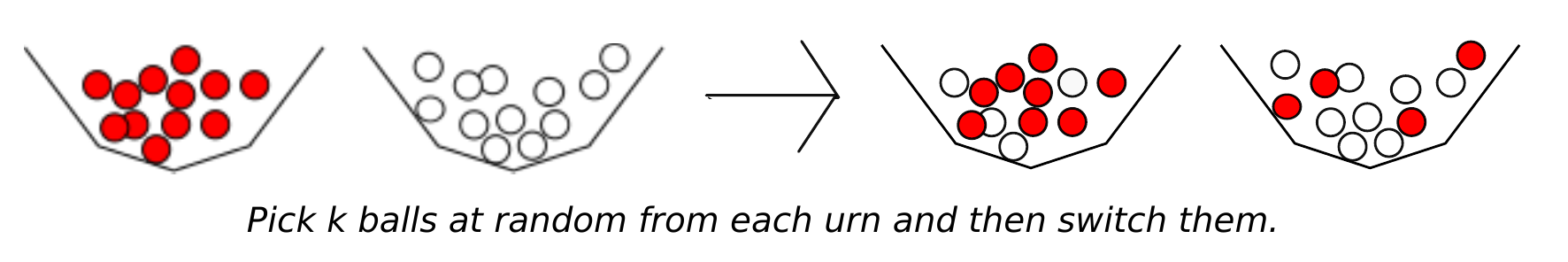}
\caption{The two pile, $k$--cut shuffle, reinterpreted as a Bernoulli-Laplace model}
\label{fig:BL}
\end{figure}

In total, there are $n$ red balls and $n$ black balls, so the state of the system is completely determined by the number of red balls in the right urn. We shall denote by $i$ the state of having $i$ red balls in the right urn. The transition matrix and stationary distribution of this chain are given by

\begin{lemma}
This Markov chain is reversible with transition matrix 
$$K(i,j)= \begin{cases} \sum^{ \min \lbrace k-(j-i),n-j,i \rbrace}_{m=0} \frac{{i \choose m} {n-i \choose k-m} {n-i \choose j-i+m} {i \choose k-(j-i) -m} }{{n \choose k}^2} & \mbox{if } i \leq j \\ \sum^{\min \lbrace k,n-j,i \rbrace}_{m=i-j} \frac{{i \choose m} {n-i \choose k-m} {n-i \choose m-(i-j)} {i \choose k+(i-j)-m} }{{n \choose k}^2} & \mbox{if } i>j \end{cases}$$
and stationary distribution the hypergeometric distribution: 
$$\\ \pi_n(j)= \frac{{n \choose j} {n \choose n-j}}{ {2n \choose n}}\quad 0 \leq j \leq n.$$
\end{lemma}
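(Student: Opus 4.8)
The plan is to first derive the transition probabilities by direct enumeration, and then to establish reversibility, which — via detailed balance — automatically identifies $\pi_n$ as a stationary distribution.

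For the transition matrix, fix the state $i$, so that the right urn holds $i$ red and $n-i$ black balls while the left urn holds $n-i$ red and $i$ black balls. A single step chooses a uniformly random $k$--subset of each urn and swaps the two subsets, so there are $\binom{n}{k}^2$ equally likely outcomes. Let $m$ be the number of red balls among the $k$ drawn from the right urn; then $k-m$ black balls are drawn from the right urn. For the new state to be $j$, the number of red balls in the right urn must change by $j-i$, so the draw from the left urn must consist of $j-i+m$ red balls and hence $k-(j-i)-m$ black balls. Counting subsets of each colour in each urn gives the weight
$$\binom{i}{m}\binom{n-i}{k-m}\binom{n-i}{j-i+m}\binom{i}{k-(j-i)-m}\Big/\binom{n}{k}^2,$$
and one sums over the feasible $m$. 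All four binomial coefficients are nonnegative exactly when $m$ lies in an interval; its right endpoint is $\min\{k-(j-i),\,n-j,\,i\}$, and since $j-i+m\ge 0$ forces $m\ge i-j$, its left endpoint is $0$ when $i\le j$ and $i-j$ when $i>j$ (the remaining constraints such as $m\ge k-(n-i)$ only kill terms that already vanish, so they may be absorbed into the sum and the range of $m$ widened accordingly). This produces the two cases in the statement, after rewriting $j-i+m=m-(i-j)$ and $k-(j-i)-m=k+(i-j)-m$ in the second case. That the rows sum to $1$ is automatic, since we have enumerated all outcomes.

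For reversibility it suffices to check detailed balance, $\pi_n(i)\,K(i,j)=\pi_n(j)\,K(j,i)$ for all $i,j$. First I would reindex the sum defining $K(j,i)$ by $m\mapsto m'=j-i+m$: this is exactly the substitution describing the reverse move, since undoing a swap that brought $j-i+m$ red balls into the right urn means drawing those same balls back out, and one checks it carries the feasible range for $K(i,j)$ bijectively onto that for $K(j,i)$. Using $\pi_n(\ell)=\binom{n}{\ell}^2/\binom{2n}{n}$, it then remains to prove, term by term, the identity
$$\binom{n}{i}^2\binom{i}{m}\binom{n-i}{k-m}\binom{n-i}{j-i+m}\binom{i}{k-(j-i)-m}=\binom{n}{j}^2\binom{j}{j-i+m}\binom{n-j}{k-(j-i)-m}\binom{n-j}{m}\binom{j}{k-m}.$$
The cleanest way to see this is combinatorial: grouping the binomials appropriately, the left side counts the ways to split the $n$ red balls into four labelled groups — in the right urn and staying, in the right urn and moving, in the left urn and moving, in the left urn and staying — of sizes $i-m,\ m,\ j-i+m,\ n-j-m$, together with an analogous four-way split of the $n$ black balls; the right side counts exactly the same decorated configurations from the vantage point of state $j$, and the swap merely permutes the two "moving" groups among the four, so the multinomial counts coincide. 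Summing over $m$ gives detailed balance, hence reversibility and stationarity of $\pi_n$; one may add that for $1\le k\le n-1$ the chain is irreducible and aperiodic, so $\pi_n$ is the unique stationary distribution.

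The main obstacle is the bookkeeping in the reversibility step: one must verify that $m'=j-i+m$ sends the feasible range for $K(i,j)$ exactly onto that for $K(j,i)$ (equivalently, that no nonzero terms are created or lost), and one must match all four binomial coefficients together with the two stray factors $\binom{n}{i}^2$ and $\binom{n}{j}^2$. The underlying identity is genuinely just a relabelling of a multinomial coefficient, so the real work is in making that relabelling transparent rather than proving it by brute-force cancellation of factorials. The derivation of $K(i,j)$ itself is routine enumeration.
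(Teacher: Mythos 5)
Your proposal is correct: the enumeration over the number $m$ of red balls drawn from the right urn gives exactly the stated $K(i,j)$ (the summation limits you widen only add vanishing binomial terms), and the term-by-term identity under the reindexing $m'=j-i+m$ does hold, since each side is the product of the two multinomial coefficients counting the four-way splits of the red and of the black balls, so detailed balance with $\pi_n(\ell)=\binom{n}{\ell}^2/\binom{2n}{n}$ follows. The paper states this lemma without proof, and your argument is precisely the standard enumeration-plus-detailed-balance verification the authors evidently intend.
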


Let $P^{*t}$ be the probability distribution of this process, starting at the configuration with $n$ red balls in the left urn
and $n$ black balls in the right urn, after $t$ steps. Distance to stationarity will be measured by the total variation distance 
$$||P^{*t} - \pi_n||_{T.V.}= \frac{1}{2} \sum_j  |P^{*t}(j) - \pi_n(j)|= \max_{A \subset X } \{ P^{*t} (A)- \pi_n(A) \}$$

where $X=\{ 0,1,2,\dots,n \}$ is the state space.

The mixing time of the walk, $t_{\text{mix}}(\epsilon)$ is defined to be the maximum over starting positions of:
$$\min \lbrace t \in \mathbb{N} :  ||P^{*t} - \pi_n|| < \epsilon \rbrace.$$

The two pile $k$--cut shuffle describes a random walk on the symmetric group. To describe the convergence of this shuffling scheme, we will obtain bounds on the mixing time of this walk.
\subsection{Results}

Diaconis and Shashahani proved in \cite{Bernoulli-Laplace} that if $k=1$ then the mixing time is $\frac{1}{4}n\log n+cn$, and that the walk has a cut-off at that time. The results obtained in the present paper generalise this result. In particular, a coupling argument  gives the following result: 

\begin{theorem}\label{general two piles upper bound}
For the two pile, $k$--cut shuffle
\begin{itemize}
\item[(a)]If $\lim_{n \longrightarrow \infty}\frac{k}{n} = 0$, then asymptotically $$t_{\text{mix}}(\epsilon) \leq \frac{n}{2k}\log \left(\frac{n}{\epsilon} \right)$$
\item[(b)] If $\lim_{n \longrightarrow \infty}\frac{k}{n} = b \leq \frac{1}{2}$, then asymptotically $$t_{\text{mix}}(\epsilon) \leq \frac{1}{2b(1-b)}\log \left(\frac{n}{\epsilon} \right)$$
\end{itemize}
\end{theorem}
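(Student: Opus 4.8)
The plan is to run a path coupling argument (in the sense of Bubley--Dyer) on the Bernoulli--Laplace chain in its urn formulation, whose state space is $X=\{0,1,\dots,n\}$ with $i$ recording the number of red balls in the right urn. I would equip $X$ with the path metric $d(i,j)=|i-j|$, which has diameter $n$; by the path coupling theorem it then suffices to produce, for each pair of adjacent states $i$ and $i+1$, a one-step coupling $(X_1,Y_1)$ with $X_0=i$, $Y_0=i+1$ satisfying $\mathbb{E}[d(X_1,Y_1)]\le 1-\alpha$ for some constant $\alpha>0$ independent of $i$. The conclusion will then be $t_{\mathrm{mix}}(\epsilon)\le\lceil \alpha^{-1}\log(n/\epsilon)\rceil$, since the Wasserstein (hence total variation) distance decays like $(1-\alpha)^t\cdot\mathrm{diam}\le e^{-\alpha t}n$.

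The coupling I would use is the natural ``move the same balls'' coupling. Fix a bijection between the $2n$ labelled balls of configuration $i$ and those of configuration $i+1$ that sends right-urn balls to right-urn balls, left-urn balls to left-urn balls, and matches balls of equal colour wherever possible. A short counting check shows this can be arranged with exactly one mismatched pair in each urn: in the right urn a ball that is black in configuration $i$ is matched to one that is red in configuration $i+1$, and in the left urn a ball that is red in configuration $i$ is matched to one that is black in configuration $i+1$. Now perform the chain step by choosing a uniformly random $k$-subset of the right urn and a uniformly random $k$-subset of the left urn, and use the \emph{same} subsets (under the bijection) in both configurations; since the bijection is measure preserving, each marginal is a genuine step of the chain, so this is a valid coupling.

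The core computation is then brief. Let $\mathbf{1}_{R}$ (resp.\ $\mathbf{1}_{L}$) be the indicator that the mismatched pair in the right (resp.\ left) urn is selected to move; these are independent, each equal to $1$ with probability $k/n$. Every colour-agreeing ball contributes identically to the red-count of the right urn in the two configurations, so after the step the difference of the two red-counts equals $\mathbf{1}_{L}-1+\mathbf{1}_{R}$, whence $d(X_1,Y_1)=|\,\mathbf{1}_{R}+\mathbf{1}_{L}-1\,|$, which is $1$ unless exactly one of the two mismatched pairs moves. Therefore $\mathbb{E}[d(X_1,Y_1)]=(1-k/n)^2+(k/n)^2=1-\tfrac{2k(n-k)}{n^2}$, uniformly in $i$. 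Taking $\alpha=2k(n-k)/n^2$ gives $t_{\mathrm{mix}}(\epsilon)\le\big\lceil \tfrac{n^2}{2k(n-k)}\log(n/\epsilon)\big\rceil$, and the two displayed bounds follow by letting $n\to\infty$: if $k/n\to 0$ then $\tfrac{n^2}{2k(n-k)}=\tfrac{n}{2k}(1+o(1))$, and if $k/n\to b\le\tfrac12$ then $\tfrac{n^2}{2k(n-k)}\to\tfrac{1}{2b(1-b)}$.

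I do not expect a serious obstacle; the only points needing care are (i) the combinatorial verification that the colour-respecting matching of two adjacent configurations leaves precisely one mismatched pair in each urn, so that the contraction constant is genuinely independent of $i$, and (ii) checking that transporting a uniform $k$-subset through the bijection really does reproduce the hypergeometric selection law in the second configuration, so that the construction is a legitimate coupling. Everything after that is the deterministic bookkeeping of how the red-count changes, together with the standard path coupling estimate and an elementary asymptotic. One should also keep in mind the degenerate behaviour when $k$ is comparable to $n$ (for instance $k=n$ makes the red-count map $i\mapsto n-i$, so the chain is periodic and does not mix), which is presumably why part (b) is restricted to $b\le\tfrac12$.
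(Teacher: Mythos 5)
Your proposal is correct and follows essentially the same route as the paper: a Bubley--Dyer path coupling on the state space $\{0,1,\dots,n\}$ with metric $|i-j|$, using the ``move the same balls'' coupling that leaves exactly one mismatched pair per urn, yielding the contraction factor $1-\tfrac{2k(n-k)}{n^2}$ and hence $t_{\text{mix}}(\epsilon)\lesssim \tfrac{n^2}{2k(n-k)}\log(n/\epsilon)$, from which both asymptotic statements follow. The only differences are cosmetic (your explicit bijection versus the paper's labelling convention, and your use of $1-\alpha\le e^{-\alpha}$ in place of the paper's $\alpha=-\log\bigl(1-\tfrac{2k(n-k)}{n^2}\bigr)$), neither of which changes the bound.
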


We derive a similar lower bound, as follows:
\begin{theorem}\label{lower two piles}
If $\lim_{n \rightarrow \infty} \frac{k}{n}= d \in [0,\frac{1}{2})$, then for $0<c< \frac{1}{4} \log (n)$ and $t = \frac{n}{4k} \log (n) - \frac{cn}{k}$ there exists a universal constant $D$ such that 
$$||P^{*t} - \pi_n||_{T.V.} \geq 1- \frac{D}{e^{4c}}.$$
\end{theorem}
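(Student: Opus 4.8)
The plan is to use the standard second-moment (Wilson-type / Chebyshev) lower bound method via a well-chosen test statistic, exploiting the fact that the Bernoulli-Laplace chain with $2$ urns has an explicit eigenfunction structure. The natural choice is the (suitably normalized) first nontrivial eigenfunction of the chain, which for the classical Bernoulli-Laplace model is a linear (affine) function of the state $i$, namely something proportional to $f(i) = i - n/2$. First I would verify that $f$ is an eigenfunction of $K$ with eigenvalue $\beta = 1 - \frac{2k}{n} + O(1/n)$ (more precisely $\beta = \frac{(n-2k)^2 + \text{lower order}}{n^2}$-type expression); this is a short direct computation using the transition kernel given in the Lemma above, or can be quoted from the $k=1$ case in \cite{Bernoulli-Laplace} after observing how the $k$-step-at-once dynamics rescales the relevant quantity. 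Starting from the deterministic state $i_0 = 0$ (all red balls in the left urn), we have $f(i_0) = -n/2$, so $\expc{f(X_t)} = \beta^t f(i_0)$ has magnitude $\sim \frac{n}{2}\beta^t$.

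Next I would estimate the variance of $f(X_t)$ under both $P^{*t}$ and the stationary distribution $\pi_n$. Under $\pi_n$ the variable $i$ is hypergeometric, so $\varn{f} = \Theta(n)$; under $P^{*t}$ one shows $\varn{f(X_t)} = O(n)$ as well, either by a one-step conditional variance recursion (the conditional variance of the number of balls exchanged is $O(k)$ per step, and summing the geometric series in $\beta$ gives $O(n)$) or by bounding it via the second eigenfunction. Then Chebyshev's inequality applied on both sides gives the usual conclusion: if $\expc{f(X_t)}^2$ is large compared to the sum of the two variances, the distributions $P^{*t}$ and $\pi_n$ are nearly supported on disjoint sets, forcing total variation close to $1$. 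Concretely, with $t = \frac{n}{4k}\log n - \frac{cn}{k}$ we get $\beta^t \approx \exp\left(-\frac{2k}{n} t\right) = \exp\left(-\frac{1}{2}\log n + 2c\right) = \frac{e^{2c}}{\sqrt n}$, so $|\expc{f(X_t)}| \sim \frac{n}{2}\cdot\frac{e^{2c}}{\sqrt n} = \frac{e^{2c}\sqrt n}{2}$, whose square is $\sim \frac{e^{4c} n}{4}$ — exactly of order $e^{4c}$ times the $\Theta(n)$ variance bounds. Feeding this into the standard distinguishing-set argument (take $A = \{i : |f(i) - \expc{f(X_t)}| \le \tfrac12 |\expc{f(X_t)}|\}$ or similar) yields $\|P^{*t} - \pi_n\|_{T.V.} \ge 1 - \frac{D}{e^{4c}}$ for a universal constant $D$ absorbing the variance constants and the Chebyshev factors.

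I would organize the writeup as: (i) a lemma identifying the eigenfunction $f$ and its eigenvalue $\beta$, with the asymptotic $\beta^t = \Theta(e^{2c}/\sqrt n)$; (ii) a lemma bounding $\varn[P^{*t}]{f(X_t)}$ and $\varn[\pi_n]{f}$ by $Cn$; (iii) the Chebyshev/distinguishing-set step combining (i) and (ii). The condition $d \in [0, \tfrac12)$ enters to guarantee $\beta$ is bounded away from $0$ (so the $O(1/n)$ corrections in $\beta$ are genuinely negligible and $\beta^t$ behaves like the claimed exponential), and the constraint $c < \tfrac14\log n$ ensures $t \ge 0$.

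The main obstacle I anticipate is part (ii): getting a clean $O(n)$ bound on $\varn[P^{*t}]{f(X_t)}$ uniformly in $t$ and in the regime $k/n \to d$. The one-step variance recursion $\varn{f(X_{s+1})} = \beta^2 \varn{f(X_s)} + \expc{\,\text{(conditional variance of the update)}\,}$ requires controlling the conditional variance of the signed number of red balls exchanged in one step given the current state; this is a hypergeometric-type variance that must be shown to be $O(k)$ (not $O(n)$) uniformly over states, after which summing $\sum_s \beta^{2(t-1-s)} O(k) = O(k/(1-\beta^2)) = O(k \cdot n/k) = O(n)$ closes the bound. Pinning down that the per-step conditional variance is $O(k)$ rather than larger — especially near states where $i$ is close to $0$ or $n$, and in the $k \asymp n$ regime — is the delicate computation; everything else is bookkeeping.
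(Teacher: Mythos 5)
Your proposal follows essentially the same route as the paper: the paper's proof also takes the first nontrivial eigenfunction (the degree-one dual Hahn polynomial $f(x)=\sqrt{n-1}\,(1-\tfrac{2x}{n})$, which is your $i-\tfrac n2$ up to normalisation), notes it has eigenvalue exactly $1-\tfrac{2k}{n}$, computes mean and variance under $\pi_n$ and under $P^{*t}$, and finishes with Chebyshev on the set $A_\alpha=\{x:|f(x)|\le\alpha\}$ with $\alpha=\tfrac12 e^{2c}$. The one substantive difference is the variance step: the paper gets an exact expression by expanding $f_1^2=\tfrac{1}{2n-1}f_0+\tfrac{2n-2}{2n-1}f_2$ and using the known eigenvalue of the second dual Hahn polynomial, whereas your primary suggestion is a one-step conditional-variance recursion. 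That route also works, and the computation you flag as delicate is in fact easy and uniform in the state: given the current state, the one-step change in $i$ is the difference of two \emph{independent} hypergeometric variables (red balls among the $k$ drawn from each urn), each with variance at most $\tfrac k4$, so the per-step conditional variance is at most $\tfrac k2$ and summing the geometric series with ratio $\beta^2$ gives $O(k\cdot\tfrac nk)=O(n)$, matching the $\Theta(n)$ stationary variance. The paper's exact expansion buys sharper constants (its variance is $\tfrac12+O(e^{4c}/n)$ in the normalised scale); your recursion is more elementary and does not need the second eigenvalue.

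Two precision points you should repair in a writeup. First, you must use that the eigenvalue is \emph{exactly} $1-\tfrac{2k}{n}$ (a one-line computation of $E(i'\mid i)$); the hedge ``$+O(1/n)$'' is not affordable, since $t=\Theta\!\left(\tfrac nk\log n\right)$ and an $O(1/n)$ slack in $\beta$ changes $\beta^t$ by $e^{O(\log n/k)}$, which is polynomially large when $k=O(1)$. Second, the approximation $\beta^t\approx e^{-2kt/n}=e^{2c}/\sqrt n$ is only valid when $k/n\to0$: for $\tfrac kn\to d\in(0,\tfrac12)$ one has $\log\tfrac1{1-2d}>2d$, so at the stated $t=\tfrac n{4k}\log n-\tfrac{cn}{k}$ the quantity $\sqrt n\,(1-2d)^t$ actually tends to $0$ and the second-moment argument gives nothing at that time. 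The paper has the same tension: its proof of the case $d\in(0,\tfrac12)$ quietly switches to the time scale $t=\tfrac12\log_{1/(1-2d)}n\pm c$ (where the mean is of constant order), so your argument, like the paper's, really establishes the bound at the stated $t$ only in the $k=o(n)$ regime and would need the same re-parametrisation of $t$ for $d>0$.
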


This paper contains bounds for the mixing time in the case where $k$ is close to $\frac{n}{2}$, using spherical function theory:
\begin{theorem}\label{case n/2}

 If $n$ is even and $k=\frac{n}{2} - c$, where $0< c \leq \log_6 n$  then
$$4||P_0^{*t} - \pi_n||^2_{T.V.} \leq B \pi^2 n^2 \left( \frac{A}{ n}\right)^{2t-2}$$
where $A=\begin{cases}1 & \mbox{ if }k=\frac{n}{2} \mbox{ or } \frac{n}{2}-1\\
6^{c-1} & \mbox{ otherwise}  \end{cases}$
and 
$B=\begin{cases}1/6 & \mbox{ if }k=\frac{n}{2} \\
1/3 & \mbox{ if }k=\frac{n}{2} -1\\
12 & \mbox{ otherwise}  \end{cases}$

\end{theorem}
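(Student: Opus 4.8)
The plan is to analyze the chain via its eigenvalues and eigenfunctions, which for the Bernoulli-Laplace model with $p=2$ urns are classically given by dual Hahn polynomials (or, equivalently, in the language of Gelfand pairs, by spherical functions for $(S_{2n} \times S_2, S_n \wr S_2)$ or the Johnson scheme). Since the chain is reversible with respect to $\pi_n$, I would first diagonalize $K$ in $L^2(\pi_n)$, obtaining eigenvalues $\beta_0 = 1 > \beta_1 > \cdots > \beta_n$ with an explicit closed form; the key quantity is $\beta_1 = A/n$ (up to the stated normalization), and one must check that the claimed value of $A$ — namely $1$ when $k = n/2$ or $n/2 - 1$, and $6^{c-1}$ otherwise, where $k = n/2 - c$ — is indeed the second-largest eigenvalue in absolute value, i.e. that $|\beta_j| \le \beta_1$ for all $j \ge 1$. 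Starting from the configuration $i = 0$ (or $i = n$), the bound then comes from the standard $L^2$ estimate
\begin{equation*}
4\|P_0^{*t} - \pi_n\|_{T.V.}^2 \le \sum_{j=1}^n \beta_j^{2t}\, \phi_j(0)^2,
\end{equation*}
where $\phi_j$ are the $\pi_n$-orthonormal eigenfunctions; the value $\phi_j(0)^2$ is the ratio of the dimension of the $j$-th irreducible component to its multiplicity and can be written in terms of binomial coefficients.

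The main steps, in order: (1) record the eigenvalues $\beta_j$ of $K$ explicitly as functions of $n$, $k$, and $j$ — these are the dual Hahn eigenvalues, polynomials of degree $2$ in $j$ roughly of the shape $\beta_j = 1 - \frac{j(\,\cdots)}{\cdots}$, with a sign flip built in when $k$ is near $n/2$ (this is why $k=n/2$ behaves specially: the largest pile transposition makes high-frequency modes nearly eigenvalue $-1$ in the undamped case, but here the perfect shuffling damps them); (2) show that $\max_{j \ge 1} |\beta_j| = |\beta_1|$ and compute $|\beta_1|$, verifying it equals $A/n$ with $A$ as stated — the three cases for $A$ arise from where the extremal $|\beta_j|$ is attained (at $j=1$ versus at $j=n$) and from the exact arithmetic of $k = n/2 - c$; (3) bound $\phi_j(0)^2 = \binom{n}{j}^2 / \binom{2n}{n} \cdot (\text{something})$, or more precisely use that $\sum_j \phi_j(0)^2 = 1/\pi_n(0) = \binom{2n}{n}$ and that the individual terms are at most polynomially large, to extract the prefactor $B\pi^2 n^2$; (4) assemble: $\sum_{j\ge 1} \beta_j^{2t}\phi_j(0)^2 \le \beta_1^{2t-2} \sum_{j\ge 1} \beta_j^{2}\phi_j(0)^2$ and bound the residual sum by a constant times $n^2$, pinning down $B$ in each of the three cases.

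The hard part will be step (2): proving that the second eigenvalue really dominates all others in absolute value for every $j$ in the regime $k = n/2 - c$ with $0 < c \le \log_6 n$, and getting the constant $A = 6^{c-1}$ exactly right rather than up to a constant. This requires a careful monotonicity/convexity analysis of the dual Hahn eigenvalue formula as a function of $j$, treating separately the contribution near $j=n$ (where the $(-1)^j$-type oscillation from the near-transposition lives) and showing the $6^{c-1}$ growth is precisely the rate at which the top-of-spectrum eigenvalue degrades as $k$ moves away from $n/2$. A secondary difficulty is controlling $\phi_j(0)^2$ sharply enough — crude bounds give the right shape $n^2$ but one needs the factor $\pi^2$ and the case-dependent $B$, which will come from a Stirling estimate on $\binom{2n}{n}$ and on the partial sums of squared binomials; I would expect to absorb the $\pi^2$ from $\sum_{j\ge 1} 1/j^2 < \pi^2/6$ after bounding each $\beta_j^2 \phi_j(0)^2$ by $\beta_1^2 \cdot (\text{const}) \cdot n^2 / j^2$ or similar. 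Everything else — reversibility, the $L^2$ bound, and the final algebra — is routine once (1)--(3) are in hand.
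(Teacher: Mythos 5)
Your overall frame (reversibility, the $L^2$ bound $4\|P_0^{*t}-\pi_n\|_{T.V.}^2 \le \sum_{j\ge1}\beta_j^{2t}\phi_j(0)^2$, dual Hahn eigenvalues for the Gelfand pair $(S_{2n},S_n\times S_n)$) is the same as the paper's, but the two steps where you locate the difficulty are set up in a way that would not go through. First, your step (3)/(4) rests on the claim that the individual weights $\phi_j(0)^2=\binom{2n}{j}-\binom{2n}{j-1}$ are ``at most polynomially large.'' They are not: their total is $1/\pi_n(0)=\binom{2n}{n}$ and the mass sits at mid-to-large $j$, so individual terms are exponentially large in $n$. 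Consequently, pulling out $\beta_1^{2t-2}$ and bounding the residual sum $\sum_j\beta_j^2\phi_j(0)^2$ by $Cn^2$ cannot be done from the single estimate $|\beta_j|\le A/n$; that route only yields something like $(A/n)^{2t}\binom{2n}{n}$, i.e.\ mixing in order $n/\log n$ steps, not the constant-step bound claimed. The real content of the paper's proof is a term-by-term cancellation: for $k=n/2$ the odd eigenvalues vanish and the even ones have an explicit product (factorial-ratio) form that decays in $j$ fast enough to eat the multiplicity, giving $\beta_j^{2}\bigl(\binom{2n}{j}-\binom{2n}{j-1}\bigr)\le n^2/(n-j+1)^2$ (with an extra constant for $k=n/2-1$ and a factor $36$ per inductive step in general); the $\pi^2$ then comes from $\sum_j (n-j+1)^{-2}\le\pi^2/6$. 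Without an estimate of this type, tying two powers of $\beta_j$ to the $j$-th multiplicity, your assembly step fails.

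Second, your step (2) misidentifies what $A/n$ is. For $k=n/2$ one has $\beta_1=1-2k/n=0$ (indeed all odd-indexed eigenvalues vanish), so $A/n$ is not $|\beta_1|$, and $6^{c-1}/n$ is not an exact extremal eigenvalue to be ``pinned down'': in the paper it is only an upper bound on all $|\beta_j(k)|$, $j\ge1$, obtained by induction on $c$ from the explicitly computed cases $k=n/2$ and $k=n/2-1$, via the three-term recurrence in $k$ (Equation \ref{recurrence}), whose coefficients are bounded by $4$ and $2$ (hence the factor $6$ per step, valid because $k$ stays of order $n$, i.e.\ $c\le\log_6 n$). So the ``hard part'' you anticipate --- a monotonicity/convexity analysis showing $6^{c-1}$ is exactly the degradation rate of the top of the spectrum --- is not needed and is not what the theorem asserts; what is needed, and what your proposal omits, is (i) the closed-form product expressions for the eigenvalues at $k=n/2$ and $k=n/2-1$ (the latter obtained from the former via the recurrence in $k$ together with the $k\leftrightarrow n-k$ symmetry), and (ii) the eigenvalue-versus-multiplicity inequality above, propagated through the induction in $c$.
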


\begin{table}[!ht]
\centering
\small
\tabcolsep=0.11cm
\begin{tabular}{c c c c }
\hline\hline
number of balls & $\substack{ \mbox{number of balls} \\  \mbox{picked from each urn }}$ & mixing time & T.V. distance after $t$ \\
\hline
$2n$ & $k$  &$ t$ & $ ||P_0^{*t} - \pi_n||_{T.V.} \leq  $   \\ [0.5ex] 
\hline
104 & 26 & 3 & $6.1 \times 10^{-4}$ \\
104 & 2 & $ 13 \log 520 \approx 81$ & $\frac{1}{10}$ \\
416 & 104 & 3 &  $3.8 \times 10^{-5}$\\
416 & 6 &  $ 17.3 \log 2080 \approx 132$  &   $\frac{1}{10}$\\
2080 & 520 &  3  &   $1.5 \times 10^{-6}$\\
2080 & 45 &  $ 11.5 \log 10400 \approx 107$  &   $\frac{1}{10}$\\ [1ex]
\hline
\end{tabular}
\caption{Examples of the mixing time for different parameter values. In particular, it is important whether $k$ is near $\frac{n}{2}$ and how large the ratio $\frac{n}{k}$ is.}
\end{table}

We prove Theorem \ref{case n/2} in Section \ref{sec:closeupper} using an explicit diagonalisation via dual Hahn polynomials, while in Section \ref{sec:coupling2} we prove Theorem \ref{general two piles upper bound} using path coupling techniques. Finally, we prove Theorem \ref{lower two piles} in Section \ref{sec:lower} using second moment methods.

We will also consider the following natural generalisation of the two pile, $k$--cut shuffle.

\begin{Definition}
\label{def:apile}
To perform a \emph{$2p$--pile, $k$--cut shuffle}, we split the deck into $2p$ equal piles, perfectly shuffle each of them, put the piles back atop each other in their original order, move $k$ cards from the top of the deck to the bottom, and repeat this process.
\end{Definition}

In the same way as the 2--pile, $k$--cut shuffle can be seen as an urn model where at each step $k$ balls are moved from each of two urns to the other, a $2p$--pile, $k$--cut shuffle can be realised as an urn model with $2p$ urns arranged in a circle, where at each step $k$ balls are moved from each urn to the next.

As with the previous shuffling scheme, we will be interested in the mixing time of this random walk as a function of $p$ and $k$. In Sections \ref{sec:urnscycle} and \ref{sec:genlower}, we prove the following

\begin{theorem}
\label{thm:morepiles}
If we have $2p$ urns, each containing $n$ balls, and simultaneously move $k$ balls from each urn to the next, then for $c>0$,
\begin{itemize}
\item[(a)] $t_{\text{mix}}\left(\frac{e^{-c}}{4}\right) \approx \frac{8np^2}{\pi^2k}\log (16np)+ c  \frac{8np^2}{\pi^2k }$
\item[(b)] $t_{\text{mix}}(e^{-c}) \geq c \frac{2p^2n}{ \pi^2k} .$
\end{itemize}
\end{theorem}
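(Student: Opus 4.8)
The plan is to treat the $2p$-urn model as a Markov chain and exploit its structure via Fourier analysis on the relevant group. As in the two-urn case, after the perfect shuffles we need only track the number of balls of each colour in each urn, and in fact it suffices to track a single colour class at a time and sum the contributions; the chain on configurations of one colour is a product of hypergeometric-type moves around the cycle $\mathbb{Z}/2p\mathbb{Z}$. The key observation is that the eigenvalues of this chain should factor through the discrete Fourier transform on $\mathbb{Z}/2p\mathbb{Z}$: a single ``move $k$ balls to the next urn'' step acts, at the level of first moments of occupation numbers, like convolution by a measure whose Fourier coefficients at frequency $j$ involve $1-\tfrac{k}{n}(1-\omega^j)$ with $\omega = e^{\pi i/p}$. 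The slowest non-trivial mode is at $j=1$, giving a contraction factor of modulus roughly $1 - \tfrac{k}{n}\cdot\tfrac{\pi^2}{2p^2}$ to leading order (since $1-\cos(\pi/p)\approx \pi^2/2p^2$), which already explains the $\tfrac{2p^2 n}{\pi^2 k}$ time scale appearing in both bounds.

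For part (a), the upper bound, I would run a coupling/comparison argument parallel to the proof of Theorem \ref{general two piles upper bound}, or alternatively bound the total variation distance by an $L^2$ computation using the eigenvalue estimate above. The factor of $8$ (rather than $2$) and the $\log(16np)$ term come from two sources: there are $2p$ colour classes each contributing, and the second-moment/variance bookkeeping around the cycle introduces the extra factors of $p$ and the constant $16$. Concretely, I expect to show $\|P^{*t}-\pi\|_{T.V.}^2 \lesssim (np)^2 \lambda_1^{2t}$ with $\lambda_1 = 1 - \tfrac{\pi^2 k}{2p^2 n}(1+o(1))$, so that $t = \tfrac{8np^2}{\pi^2 k}\log(16np) + c\tfrac{8np^2}{\pi^2 k}$ drives the right-hand side below $\tfrac{e^{-c}}{4}$ after taking square roots; the arithmetic of matching the constant $8$ to the constant $\tfrac{1}{2}$ in $\lambda_1$ together with the $\log$ is routine once the eigenvalue is pinned down.

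For part (b), the lower bound, I would use the second moment method on the linear test function $\varphi = \sum_{r} \omega^r X_r$, where $X_r$ is the number of red balls in urn $r$ and $\omega = e^{\pi i/p}$ — this is the eigenfunction for the slow mode. One computes $\expc{\varphi}$ under $P^{*t}$ (it decays like $\lambda_1^t$ times its initial value, which is of order $n$) and bounds $\varn{\varphi}$ under both $P^{*t}$ and $\pi$ (order $n$, using that correlations between urns are weak), then applies Chebyshev to separate the two distributions: if $\lambda_1^t \cdot n \gg \sqrt{n}$, i.e. $t \leq \tfrac{1}{-\log\lambda_1}\cdot\tfrac12\log n$-ish, the distributions are distinguishable. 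Tracking constants gives the stated $t_{\text{mix}}(e^{-c}) \geq c\tfrac{2p^2 n}{\pi^2 k}$.

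The main obstacle I anticipate is pinning down the eigenvalues of the $2p$-urn chain precisely enough — in particular verifying that the Fourier modes on $\mathbb{Z}/2p\mathbb{Z}$ genuinely diagonalise (or block-diagonalise) the relevant part of the transition operator, and controlling the subleading eigenvalues and the multiplicities well enough that the $L^2$ sum in part (a) is dominated by the $j=\pm 1$ terms. The two-urn case had an explicit diagonalisation via dual Hahn polynomials (Theorem \ref{case n/2}); for general $p$ one likely cannot diagonalise completely, so the argument must instead combine the exact Fourier structure of the ``cyclic shift'' part with a comparison or coupling estimate for the within-urn hypergeometric part, and making these two pieces interact cleanly — especially uniformly in the regime where $k/n$ may be small — is where the real work lies.
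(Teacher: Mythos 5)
Your time scale and the identification of the slow Fourier mode on $\mathbb{Z}/2p\mathbb{Z}$ are right, and your part (b) is a legitimate alternative route: the paper does something simpler, namely it projects to the pile containing a single card (the ace of spades), observes that this marginal is a lazy random walk on the $2p$-cycle, and lower bounds total variation by evaluating the bounded test function $f(j)=e^{\pi i j/p}$, so that $\|P^{*t}-U\|_{T.V.}\geq \left|\tfrac{n-k}{n}+\tfrac{k}{n}\cos\tfrac{\pi}{p}\right|^{t}\approx e^{-c}$ at $t=c\,\tfrac{2p^2n}{\pi^2k}$; no second moment is needed because $\|f\|_\infty\le 1$. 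Your statistic $\varphi=\sum_r \omega^r X_r$ could work (and might even yield an extra $\log n$ factor), but it requires a variance bound involving the correlations between balls under the dynamics, which you do not supply, so as written it is heavier and less complete than the paper's one-line projection argument.

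Part (a) is where the genuine gap lies. Your primary route, an $L^2$ bound of the form $\|P^{*t}-\pi\|^2\lesssim (np)^2\lambda_1^{2t}$, is unsubstantiated: the state space of the $2p$-urn, $2p$-colour chain is enormous, the number of eigenvalues is exponential in $np$, and unlike the two-urn case there is no explicit diagonalisation (no analogue of the dual Hahn analysis), so controlling multiplicities and subleading eigenvalues well enough to get a polynomial prefactor is exactly the hard part -- which you yourself flag as the unresolved obstacle. Your fallback, a coupling ``parallel to Theorem \ref{general two piles upper bound},'' does not transfer either: that path coupling uses the linear metric $|a-b|$, and on a cycle of length $2p>2$ the discrepancy between two copies is essentially a martingale in the graph metric, so there is no contraction to exploit. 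The paper's actual proof supplies the missing ingredient: it pairs balls between the two copies urn-by-urn, moves partners together, and measures the discrepancy of each ball pair in the eigenfunction metric $d(i,j)=\sin\!\left(|i-j|\tfrac{\pi}{2p}\right)/\sin\!\left(\tfrac{\pi}{2p}\right)$ from Section \ref{sec:cycle}. The sum $D$ of these distances contracts in expectation by the factor $\tfrac{k^2+(n-k)^2}{n^2}+\tfrac{2k(n-k)}{n^2}\cos\!\left(\tfrac{\pi}{2p}\right)$ per step, and the conditional bound $E(d_i \mid d_i\neq 0)\gtrsim \tfrac{p}{\pi}$ converts a small $E(D)$ into a high probability that all balls are matched, giving the stated $\tfrac{8np^2}{\pi^2 k}\log(16np)+c\,\tfrac{8np^2}{\pi^2k}$ after using $n-k\geq n/2$. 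Without this concave (sine) metric and the ball-wise coupling, neither of your two proposed routes closes.
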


\begin{table}[!ht]
\centering
\begin{tabular}{c c c c }
\hline\hline
total number of balls & number of urns & $\substack{\mbox{number of balls} \\ \mbox{picked from each urn}}$  & mixing time \\
\hline
$2np$ & $2p$ & $k$  & $t_{\text{mix}} \leq$ \\ [0.5ex] 
\hline
104 & 4 & 13 & 11  \\
104 & 4 & 2 &  71 \\
4160 & 10 & 102 & 220 \\
4160 & 10 & 6 & 3700 \\ [1ex]
\hline
\end{tabular}
\caption{Some examples of the upper bound of Theorem \ref{thm:morepiles} for different values of the parameters.}
\end{table}
\begin{remark}
Theorem \ref{thm:morepiles} applies to the case where the deck is cut into an even number of piles. A similar upper bound can be obtained for the case of an odd number of piles. If the deck is cut into $2p+1$ of piles of $n$ cards, then  add $n$ marked cards to the top of the deck, perform the shuffle with $2p+2$ piles, and then remove the marked cards. 
\end{remark}

\section{Generalisations with multiple piles}
\label{sec:genpiles}

\subsection{Coupling for a random walk on the cycle}
\label{sec:cycle} 

In this section, we consider a simple random walk on the cycle. Problems similar to this are considered in, for example, Example 2.1 of \cite{Threads}. Our goal in this section is to introduce techniques which will allow us in Section \ref{sec:urnscycle} to study the $k$--cut $2p$--pile shuffle.

Consider a particle moving on the vertices of a cycle of length $2y$. With probability $\frac{1}{4}$ it jumps one position to the left or the right, otherwise it remains in place (with probability $\frac{1}{2}$). We will use coupling to find an upper bound on the mixing time for this random walk.

Each state is specified by the position of the particle, which is a number modulo $2y$. Consider the metric 
\begin{equation}\label{eq:defd}d(i,j) = \frac{\sin(|i-j|\frac{\pi}{2y})}{\sin(\frac{\pi}{2y})}.\end{equation}

We shall not use that $d$ satisfies the triangle inequality, but this follows from $d$ being a concave function applied to the usual graph metric on the cycle.

\begin{Remark}
\label{rem:eitherarc}
The function $d(i,j)$ depends only on $|i-j|$, and gives the same result if $2y-|i-j|$ is used instead, so it doesn't matter which way around the cycle distances are measured. Nonetheless, it's easier to consider all distances to be measured along the shorter arc. We will use $|i-j|$ to refer to this shorter distance.
\end{Remark}

The constant on the denominator  of Equation \ref{eq:defd} scales the function $d$ so that $d(i,i+1) = 1.$ 

In order to apply coupling, we consider two instances of the random walk, in states $X$ and $Y$ respectively. We couple the two walks so that when they are not in the same state, if one of them moves, the other does not. 
If they are in the same state, then they move in the same way. This coupling appears as Example 5.3.1 of \cite{LPW}. As the walks evolve according to this coupling, we examine the expected value of the function $d(X,Y)$. Our goal is to show that after a certain time, there is a high probability that the two chains are in the same state. It suffices to work with the starting configuration where the two chains are in exactly opposite states, because the evolution of this joint state must pass through states equivalent to every other joint state before being in a state where the two chains match. Hence an upper bound for this initial configuration will also bound the convergence of any other initial configuration.

\begin{Remark}
\label{rem:initE}
At time $t = 0$, the maximum possible value of $E(d(X,Y))$ is $$\frac{\sin(\frac{n\pi}{2y})}{\sin(\frac{\pi}{2y})} = \frac{1}{\sin(\frac{\pi}{2y})} \approx \frac{2y}{\pi}$$
\end{Remark}

\begin{Proposition}
\label{prop:decayE}
If $X_k$ and $Y_k$ are the states of the random walks $X$ and $Y$ after $k$ steps, then we have that $E(d(X_{k+1},Y_{k+1})) = \cos(\frac{\pi}{2y})E(d(X_k,Y_k))$
\end{Proposition}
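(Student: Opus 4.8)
The plan is to condition on the positions of the two walks after $k$ steps and show that the claimed identity already holds for the conditional expectation $E(d(X_{k+1},Y_{k+1}) \mid X_k, Y_k)$; the unconditional statement then follows at once from the tower property. So fix the states $X_k$ and $Y_k$ and let $j = |X_k - Y_k|$ denote the shorter-arc distance between them, in the sense of Remark \ref{rem:eitherarc}.

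First I would unpack one step of the coupling. If $j = 0$ the walks coincide and move together, so $d(X_{k+1},Y_{k+1}) = 0 = \cos(\tfrac{\pi}{2y})\cdot 0$ and there is nothing to prove. If $j \geq 1$, then under the coupling exactly one of the two particles makes a move while the other stays put: indeed, since each marginal walk moves with probability $\tfrac12$ and ``both move'' has probability $0$, inclusion–exclusion forces ``neither moves'' to have probability $0$ as well. Conditioned on which particle moves, it goes left or right with probability $\tfrac12$ each, so the single step is equally likely to be any of the four moves ``$X$ left'', ``$X$ right'', ``$Y$ left'', ``$Y$ right'', each of probability $\tfrac14$ (and one checks this reproduces the correct marginals). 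Of these four moves, two decrease the inter-particle distance to $j-1$ and two increase it to $j+1$, so
\begin{equation*}
E\big(d(X_{k+1},Y_{k+1}) \mid X_k, Y_k\big) = \tfrac12\, d(j-1) + \tfrac12\, d(j+1).
\end{equation*}

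Here I would be careful about one boundary situation. When $j = y$ (antipodal particles), a distance-increasing move actually yields two points at graph distance $y-1$ rather than $y+1$; but since $\sin\!\big(\tfrac{(y+1)\pi}{2y}\big) = \cos\!\big(\tfrac{\pi}{2y}\big) = \sin\!\big(\tfrac{(y-1)\pi}{2y}\big)$, the function $d$ gives the same value whether written $d(y+1)$ or $d(y-1)$, so the displayed formula remains valid verbatim. The case $j = 1$ simply contributes $d(0) = 0$ from the decreasing move, which is harmless, and for $1 \le j < y$ no arc-switching occurs.

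It then remains to substitute $d(j) = \sin\!\big(j\tfrac{\pi}{2y}\big)\big/\sin\!\big(\tfrac{\pi}{2y}\big)$ and apply the product-to-sum identity $\sin\!\big((j-1)\theta\big) + \sin\!\big((j+1)\theta\big) = 2\sin(j\theta)\cos\theta$ with $\theta = \tfrac{\pi}{2y}$, which gives $\tfrac12\, d(j-1) + \tfrac12\, d(j+1) = \cos\!\big(\tfrac{\pi}{2y}\big)\, d(j)$; taking expectations over $X_k, Y_k$ completes the proof. There is no genuinely hard step: the only points requiring care are the precise description of the coupling (in particular that exactly one particle moves) and the antipodal edge case, while the trigonometric manipulation is routine.
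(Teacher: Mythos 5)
Your proof is correct and follows essentially the same route as the paper: condition on the pair of states, observe that under the coupling exactly one particle moves so the separation becomes $j\pm 1$ with probability $\tfrac12$ each, apply the identity $\sin((j-1)\theta)+\sin((j+1)\theta)=2\sin(j\theta)\cos\theta$, and dispose of the antipodal case via the symmetry $\sin\bigl(\tfrac{(y+1)\pi}{2y}\bigr)=\sin\bigl(\tfrac{(y-1)\pi}{2y}\bigr)$, exactly as the paper does through Remark \ref{rem:eitherarc}. Your explicit inclusion--exclusion justification that ``exactly one particle moves'' is a nice touch the paper leaves implicit, but it is not a different argument.
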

\begin{proof}
We will work with each possible pair of definite states $(X_k,Y_k)$. We need only consider the cases where $d(X_k,Y_k) \neq 0$, because the other cases do not contribute to $E(d(X_k,Y_k))$ or to $E(d(X_{k+l},Y_{k+l}))$ for any $l \geq 0$. If $d(X_k,Y_k)$ is nonzero, then there is a $\frac{1}{2}$ chance that $X$ and $Y$ move away from one another, and a $\frac{1}{2}$ chance that they move towards one another, unless they are at opposite vertices, in which case they must move towards one another.

We now calculate $E(d(X_{k+1},Y_{k+1}))$. Let $\delta = |X_k - Y_k|$.

If $0 < \delta < y$, then $|X_{k+1} - Y_{k+1}|$ is equally likely to be $\delta - 1$ and $\delta + 1$. Therefore, we calculate 
\begin{align*}
E(d(X_{k+1},Y_{k+1})) &= \frac{1}{2}\frac{\sin((\delta+1)\frac{\pi}{2y})}{\sin(\frac{\pi}{2y})}+\frac{1}{2}\frac{\sin((\delta-1)\frac{\pi}{2y})}{\sin(\frac{\pi}{2y})}\\
&= \cos\left(\frac{\pi}{2y}\right)d(X_k,Y_k)
\end{align*}

If $\delta = y$, then by Remark \ref{rem:eitherarc}, a similar calculation shows that in this case too, \begin{equation}\label{eq:scalefactor}E(d(X_{k+1},Y_{k+1})) = \cos\left(\frac{\pi}{2y}\right)d(X_k,Y_k)\end{equation}
We have shown that for any definite pair of states $(X_k,Y_k)$, Equation \ref{eq:scalefactor} is true. Combining these results across any mixture of states $(X_k,Y_k)$, we get that $$E(d(X_{k+1},Y_{k+1})) = \cos(\frac{\pi}{2y})E(d(X_k,Y_k))$$
This completes the proof.
\end{proof}

\begin{Corollary}
\label{cor:couple1}
We have that for $t = cy^2\frac{4}{\pi^2}$, with $c>0$ $$E(d(X_t,Y_t)) \leq e^{-c}\frac{y}{2\pi}.$$
\end{Corollary}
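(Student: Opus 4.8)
The plan is to combine Proposition~\ref{prop:decayE} with the bound on the initial expected distance from Remark~\ref{rem:initE}. Proposition~\ref{prop:decayE} tells us that each step multiplies $E(d(X_k,Y_k))$ by the constant factor $\cos\left(\frac{\pi}{2y}\right)$, so after $t$ steps we have $E(d(X_t,Y_t)) = \cos^t\left(\frac{\pi}{2y}\right)E(d(X_0,Y_0))$. By Remark~\ref{rem:initE}, the worst-case initial value satisfies $E(d(X_0,Y_0)) \leq \frac{1}{\sin\left(\frac{\pi}{2y}\right)}$, which is asymptotically $\frac{2y}{\pi}$; in fact since $\sin x \geq \frac{2}{\pi}x$ on $[0,\frac{\pi}{2}]$ we get the clean bound $E(d(X_0,Y_0)) \leq y$.

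Next I would estimate $\cos^t\left(\frac{\pi}{2y}\right)$. Using $\cos x \leq e^{-x^2/2}$, valid for all real $x$, we obtain $\cos^t\left(\frac{\pi}{2y}\right) \leq \exp\left(-\frac{t\pi^2}{8y^2}\right)$. Substituting $t = cy^2\frac{4}{\pi^2}$ gives $\cos^t\left(\frac{\pi}{2y}\right) \leq \exp\left(-\frac{c}{2}\right)$. Combining with the initial bound yields $E(d(X_t,Y_t)) \leq y\, e^{-c/2}$, which is not quite the claimed $e^{-c}\frac{y}{2\pi}$, so I will need to be slightly more careful with the constants.

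To recover the stated constant, I would instead keep the sharper initial bound $E(d(X_0,Y_0)) \leq \frac{1}{\sin\left(\frac{\pi}{2y}\right)}$ and note that for $y$ large this behaves like $\frac{2y}{\pi}$, while also using the tighter estimate on the contraction factor — one should check whether the intended statement uses $t = cy^2\frac{4}{\pi^2}$ to mean that the exponent works out to exactly $-c$ after accounting for $\log\cos\left(\frac{\pi}{2y}\right) \approx -\frac{\pi^2}{8y^2}$ but with a factor that produces $e^{-c}$ rather than $e^{-c/2}$; most likely the corollary as used downstream only needs an inequality of the form $E(d(X_t,Y_t)) \leq e^{-c}\,\frac{y}{2\pi}$ up to choosing the constant in $t$ appropriately, and the factor $\frac{1}{2\pi}$ on the right is just $\frac{1}{\pi}$ times the $\frac{1}{2}$ one loses. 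I would write out $\log E(d(X_t,Y_t)) \leq \log\frac{1}{\sin(\pi/2y)} + t\log\cos\left(\frac{\pi}{2y}\right)$ and expand both logarithms via their Taylor series in $\frac{1}{y}$, tracking that $\log\frac{1}{\sin(\pi/2y)} = \log\frac{2y}{\pi} + O(y^{-2})$ and $\log\cos\left(\frac{\pi}{2y}\right) = -\frac{\pi^2}{8y^2} + O(y^{-4})$, so that with $t = \frac{4cy^2}{\pi^2}$ the second term contributes $-\frac{c}{2} + o(1)$; absorbing the $\log\frac{2y}{\pi}$ and matching to the desired right-hand side pins down the precise threshold.

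The main obstacle is bookkeeping of constants rather than any conceptual difficulty: the geometric decay and the initial bound are both already in hand, so the work is entirely in choosing the Taylor approximations for $\sin$ and $\cos$ (or the elementary inequalities $\sin x \geq \frac{2x}{\pi}$, $\cos x \leq e^{-x^2/2}$) carefully enough that the final inequality comes out with the clean constant $e^{-c}\frac{y}{2\pi}$ and holds for all sufficiently large $y$ (or, ideally, all $y \geq 2$). I would double-check that the corollary is being invoked later only in this asymptotic form, so that an $o(1)$ error in the exponent is harmless, which would let me present the proof as a two-line computation.
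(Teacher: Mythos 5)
Your proposal follows exactly the paper's route: combine Remark~\ref{rem:initE} with the per-step contraction of Proposition~\ref{prop:decayE} and estimate $\cos^t\!\left(\frac{\pi}{2y}\right)$. The difference is that your arithmetic is the correct version and the mismatch you noticed is not something you can fix by sharper bookkeeping: the paper's proof uses the approximation $\cos x \approx 1-x^2$ (rather than $1-\tfrac{x^2}{2}$), which is precisely where its $e^{-c}$ comes from in place of your $e^{-c/2}$, and moreover the paper's own computation ends with $e^{-c}\cdot\frac{2y}{\pi}$, which is not the stated $e^{-c}\cdot\frac{y}{2\pi}$ anyway. So the honest conclusion of this argument is $E(d(X_t,Y_t)) \lesssim e^{-c/2}\cdot\frac{2y}{\pi}$ (or your cleaner $E(d(X_t,Y_t)) \le y\,e^{-c/2}$ via $\sin x \ge \tfrac{2x}{\pi}$), and you should simply state that, or equivalently take $t=\frac{8cy^2}{\pi^2}$ to restore $e^{-c}$. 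Your guess about downstream use is right: this corollary is only combined with Corollary~\ref{cor:exn2} to give $P(X_t\neq Y_t)\le \mathrm{const}\cdot e^{-\Theta(c)}$, and the subsequent statements (the $t_{\text{mix}}(2e^{-c})\lesssim \frac{4cy^2}{\pi^2}$ corollary, Corollary~\ref{cor:couple2}, Theorem~\ref{thm:morepiles}(a)) are asymptotic ($\approx$, $\lesssim$), so rescaling $c$ by a factor of $2$ and adjusting the prefactor is harmless. Two minor cautions: the inequality $\cos x\le e^{-x^2/2}$ is not valid for all real $x$ (e.g.\ $x=2\pi$), though it does hold on $[0,\frac{\pi}{2}]$, which is all you need here; and Proposition~\ref{prop:decayE} gives equality for the contraction only while the chains are uncoupled, so keep the statement as an inequality after coupling, as you did.
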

\begin{proof}
Combining Remark \ref{rem:initE} and Proposition \ref{prop:decayE}, we have that to first order, $$E(d(X_t,Y_t)) \leq \cos^t\left(\frac{\pi}{2y}\right)\cdot\frac{2y}{\pi}.$$

Using that $\cos(x) \approx 1 - x^2$ for small $x$ and that $(1 - \frac{1}{k})^k \approx e^{-1}$ for large $k$, we have that for $t \approx  cy^2\frac{4}{\pi^2},$

\begin{align*}
E(d(X_t,Y_t)) &\leq \cos^t\left(\frac{\pi}{2y}\right)\cdot\frac{2y}{\pi}\\
&\approx \left(1-\frac{\pi^2}{4y^2}\right)^{\frac{c4y^2}{\pi^2}} \cdot \frac{2y}{\pi}\\
&\approx e^{-c}\frac{2y}{\pi}
\end{align*}

This completes the proof.
\end{proof}

At this point, we will use our earlier assumption that the initial configuration has the two chains in exactly opposite states.

\begin{Proposition}
\label{prop:exn2}
At any time $t$, the expected value of $|X_t-Y_t|$, conditioned on $X_t \neq Y_t$ is at least $\frac{y}{2}$.
\end{Proposition}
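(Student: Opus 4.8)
The plan is to exploit the very special structure of the coupling in the case where the two chains start in exactly opposite states. Under the coupling described above, whenever $X_t \neq Y_t$ only one of the two particles moves at a time, so the parity of $|X_t - Y_t|$ (the shorter-arc distance on the cycle of length $2y$) changes by exactly $\pm 1$ at each step in which the chains do not coincide. Since the chains begin at distance $y$ apart and can only reach $X_t = Y_t$ via distance $0$, at any time before coalescence the distance $\delta_t = |X_t - Y_t|$ takes values in $\{1, 2, \dots, y\}$. The key observation I would establish first is a symmetry: conditioned on $X_t \neq Y_t$, the distribution of $\delta_t$ is symmetric about $y/2$, i.e. $\pr{\delta_t = m \mid X_t \neq Y_t} = \pr{\delta_t = y - m \mid X_t \neq Y_t}$ for each $m$.

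To prove this symmetry I would set up an explicit measure-preserving involution on coupled trajectories. Given the opposite-state start, label the two particles; at each step one of them is selected (each with probability depending only on whether they coincide) and moved left or right with equal chances, or neither moves. The map that, at every step, replaces the move of the selected particle by the opposite move (left $\leftrightarrow$ right) while leaving the ``neither moves'' steps alone sends $X_t \mapsto 2X_0 - X_t$ and $Y_t \mapsto 2Y_0 - Y_t$ (reflections of each particle about its own start). Because $X_0$ and $Y_0$ are antipodal, $X_0 - Y_0 \equiv y \pmod{2y}$, and one checks that this involution sends $\delta_t = |X_t - Y_t|$ to $|2(X_0-Y_0) - (X_t - Y_t)| = |2y - (X_t-Y_t)|$, which modulo the shorter-arc convention of Remark \ref{rem:eitherarc} is exactly $y - \delta_t$ when $\delta_t \le y$ (and it preserves the event $X_t \ne Y_t$, equivalently $\delta_t \ne 0$, as well as the event $\delta_t \ne y$). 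Since the involution preserves the law of the coupled walk, it interchanges the events $\{\delta_t = m\}$ and $\{\delta_t = y-m\}$ within the conditioning event, giving the claimed symmetry.

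Once the symmetry is in hand the conclusion is immediate: pairing the value $m$ with $y-m$,
\begin{align*}
\expc{|X_t - Y_t| \,\middle|\, X_t \neq Y_t} &= \sum_{m=1}^{y-1} m \cdot \pr{\delta_t = m \mid X_t \neq Y_t} + y\cdot \pr{\delta_t = y \mid X_t \ne Y_t}\\
&= \sum_{m=1}^{y-1} \frac{m + (y-m)}{2}\,\pr{\delta_t = m \mid X_t \neq Y_t} + y\cdot\pr{\delta_t = y \mid X_t \ne Y_t} \\
&\geq \frac{y}{2}\sum_{m=1}^{y} \pr{\delta_t = m \mid X_t \neq Y_t} = \frac{y}{2}.
\end{align*}

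I expect the main obstacle to be pinning down the reflection involution carefully enough to handle the cycle's wrap-around and the shorter-arc convention without sign errors — in particular verifying that reflecting each particle about its own starting point really does send $\delta_t$ to $y - \delta_t$ on the shorter arc, and that the ``stay in place'' moves and the selection rule (which depends only on whether the particles currently coincide, a property manifestly preserved by the reflection) are untouched by the map. An alternative, more computational route would track the exact distribution of $\delta_t$ via the one-step transition probabilities on $\{0,1,\dots,y\}$ (a lazy birth–death-type chain with a reflecting-type boundary at $y$ and absorption at $0$) and verify the symmetry about $y/2$ inductively; but the involution argument is cleaner and avoids carrying the induction hypothesis through the boundary behavior at $\delta = y$.
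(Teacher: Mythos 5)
Your key lemma --- that, conditioned on $X_t \neq Y_t$, the law of $\delta_t = |X_t-Y_t|$ is symmetric about $y/2$ --- is false, and the involution you propose does not establish it. Because the starting states are antipodal, $2(X_0-Y_0) \equiv 2y \equiv 0 \pmod{2y}$, so reflecting each particle about its own starting point sends the directed difference $X_t - Y_t$ to $-(X_t-Y_t) \bmod 2y$; by Remark \ref{rem:eitherarc} this leaves the shorter-arc distance unchanged, rather than mapping it to $y-\delta_t$ (the shorter arc corresponding to $2y-D$ is $\min(D,2y-D)=\delta$, not $y-\delta$). So your involution only yields the vacuous symmetry $\delta_t \mapsto \delta_t$. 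The symmetry itself fails outright: take $2y=6$ (so $y=3$) and $t=1$. From antipodal states the coupling moves exactly one particle, so $\delta_1 = 2$ with probability one; symmetry about $y/2 = 3/2$ on $\{1,2\}$ would force $\mathbb{P}(\delta_1=1)=\mathbb{P}(\delta_1=2)$, which is false, and with it the middle equality of your displayed computation fails (even though the final numerical bound happens to hold there). More structurally, the conditional law of $\delta_t$ is skewed toward large values, not symmetric: the directed difference performs simple random walk on the cycle absorbed at $0$, whose quasi-stationary law is proportional to $\sin\left(\pi k/(2y)\right)$, so the conditional mean of $\delta_t$ tends to $2y/\pi > y/2$ (compare Remark \ref{rem:58}); no repair of the involution can produce symmetry about $y/2$.

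The paper's argument has a genuinely different structure, and it is the part your proposal is missing. It tracks the directed difference measured consistently clockwise, with law $f_t$ supported alternately on even and odd residues, and shows by induction on the two-step recurrence $f_t(k)=\tfrac14 f_{t-2}(k-2)+\tfrac12 f_{t-2}(k)+\tfrac14 f_{t-2}(k+2)$ that $f_t$ is concave on its support and symmetric about $y$. From this unimodality it deduces that the law of $|X_t-Y_t|$ dominates the one induced by the uniform distribution on the support of $f_t$, and a direct computation of that uniform average gives the lower bound $y/2$. If you want to salvage your approach, you must replace the false symmetry about $y/2$ by a statement of this monotonicity/unimodality type (or compare with the sine-profile quasi-stationary law); as written, the proof has a fatal gap at its central step.
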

\begin{proof}
At each time $t$, consider the function $f_t(k) = P(X_t-Y_t = k \neq 0)$. These functions are alternately supported on the sets of even and of odd integers. For this proof, we will measure the quantity $X_t - Y_t$ in a consistent direction, such as clockwise, rather than always using the shortest arc. Hence this quantity can take values in ${0, 1, \dots, 2y-1,2y}$, with $2y$ treated the same as zero. The distance $|X_t-Y_t|$ will still be measured along the shorter arc.

We will show that each $f_t$ is concave on its support. This is true of the function $f_0$, which is zero everywhere except $y$, where it is one, and of the function $f_1$, which is $\frac{1}{2}$ at $y-1$ and $y+1$, and zero elsewhere.

Each function $f_t$ is obtained from a previous one by $f_t(k) = \frac{1}{4}f_{t-2}(k-2)+\frac{1}{2}f_{t-2}(k)+\frac{1}{4}f_{t-2}(k+2)$. This recurrence preserves the property that $f_t$ is concave on its support, so each $f_t$ has this property. Each function $f_t$ is symmetric about $x=y$, so has a maximum there if $t$ is even and maxima at $x = y \pm 1$ if $t$ is odd. Hence, the distribution of $|X_t-Y_t|$ according to $f_t$ dominates the distribution of $|X_t-Y_t|$ uniform on the support of $f_t$. 

Therefore the expected value of $|X_t-Y_t|$, conditioned on $X_t \neq Y_t$, is at least what it would be if $f_t$ were constant on its support. The calculations are slightly different depending on the parities of $y$ and $t$. For example, if $y$ and $t$ are both even:
\begin{align*}
E(|X_t-Y_t|:X_t \neq Y_t)) & \geq \frac{1}{y-1}(2 + 4 + \dots + (y-2) + n + (y-2) + \dots + 4 + 2) \\
& = \frac{y^2}{2(y-1)} \\
& > \frac{y}{2}
\end{align*}

Similar calculations give the result in the three other cases, completing the proof.
\end{proof}

\begin{Corollary}
\label{cor:exn2}
At any time $t$, the expected value of $d(X_t,Y_t)$, conditioned on $X_t \neq Y_t$, is at least $\frac{y}{\pi}$.
\end{Corollary}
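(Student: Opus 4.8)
The plan is to combine the lower bound on the conditional expectation of the graph distance from Proposition \ref{prop:exn2} with the concavity of the function $x \mapsto \sin(x\pi/2y)$ on $[0,y]$. Concretely, $d(i,j) = \sin(|i-j|\pi/2y)/\sin(\pi/2y)$ is a concave function of the shorter-arc distance $|i-j|$ on the range $\{0,1,\dots,y\}$, since $\sin$ is concave on $[0,\pi/2]$ and $|i-j|\pi/2y$ ranges over $[0,\pi/2]$ there. Concavity of $d$ as a function of the underlying graph distance is exactly what lets us push the expectation inside.

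The key steps, in order, are as follows. First, I would recall from the proof of Proposition \ref{prop:exn2} that, conditioned on $X_t \neq Y_t$, the distribution of $|X_t - Y_t|$ stochastically dominates the uniform distribution on the support of $f_t$, and in particular $E(|X_t - Y_t| : X_t \neq Y_t) \geq y/2$. Second, I would invoke Jensen's inequality in the concave direction: since $d$ is concave in the graph distance, $E(d(X_t,Y_t) : X_t \neq Y_t) \geq$ the value obtained by applying $d$ to a suitably averaged configuration — more carefully, since $d(\delta)$ is concave and the conditional law of $\delta = |X_t-Y_t|$ dominates the uniform law on the support, I can replace the conditional law by the uniform law on $\{1,\dots,y\}$ (or the appropriate parity class) without increasing the expectation, and then apply Jensen. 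Third, a direct estimate: with $\delta$ uniform on roughly $\{1,\dots,y\}$ (the parity subtleties are handled exactly as in Proposition \ref{prop:exn2}, and cost only lower-order terms), Jensen gives
\begin{align*}
E(d(X_t,Y_t) : X_t \neq Y_t) &\geq \frac{\sin\!\left(E(\delta)\,\frac{\pi}{2y}\right)}{\sin\!\left(\frac{\pi}{2y}\right)} \geq \frac{\sin\!\left(\frac{y}{2}\cdot\frac{\pi}{2y}\right)}{\sin\!\left(\frac{\pi}{2y}\right)} = \frac{\sin(\pi/4)}{\sin(\pi/2y)}.
\end{align*}
Finally, using $\sin(\pi/2y) \approx \pi/2y$ for large $y$ and $\sin(\pi/4) = 1/\sqrt{2} > 1/2$, the right-hand side is asymptotically at least $\tfrac{1}{2}\cdot\tfrac{2y}{\pi} = y/\pi$, which is the claimed bound.

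The main obstacle I expect is making the application of Jensen's inequality fully rigorous given that $d$ is only concave on the shorter arc $\{0,\dots,y\}$ while $X_t - Y_t$ was tracked in a fixed direction in $\{0,\dots,2y\}$: one must be careful that folding the distribution onto $\{0,\dots,y\}$ (via $\delta \mapsto \min(\delta, 2y-\delta)$) still produces a distribution on which $d$ is concave and on which the stochastic-domination statement of Proposition \ref{prop:exn2} applies. This is where the symmetry of $f_t$ about $x=y$ is essential — it guarantees the folded law is exactly the law of the shorter-arc distance, and concavity of $f_t$ on its support transfers. The parity bookkeeping (whether $y$ and $t$ are even or odd, which shifts the support of $f_t$ and hence $E(\delta)$ by $O(1)$) only affects constants and is absorbed into the asymptotic "$\geq y/\pi$"; I would note that the same four-case split used in Proposition \ref{prop:exn2} applies verbatim here.
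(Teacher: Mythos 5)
Your key step misapplies Jensen's inequality, and this is a genuine gap. The function $\delta \mapsto \sin(\delta\pi/2y)/\sin(\pi/2y)$ is indeed concave on $[0,y]$, but for a concave function Jensen gives $E\bigl(d(\delta)\bigr) \le d\bigl(E(\delta)\bigr)$ --- the opposite of the inequality $E(d(X_t,Y_t)\mid X_t\ne Y_t) \ge \sin\bigl(E(\delta)\tfrac{\pi}{2y}\bigr)/\sin\bigl(\tfrac{\pi}{2y}\bigr)$ that your chain of estimates relies on. Once the conditional law of $\delta$ is non-degenerate (which happens for $t\ge 2$), the reverse inequality holds strictly, so this step is not merely unjustified but false. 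The stochastic-domination reduction does not repair it: domination plus monotonicity of $d$ correctly gives $E(d(\delta))\ge E(d(U))$ for $U$ uniform on the support, but you would then have to lower-bound $E(d(U))$ by an explicit sum (the average of $\sin$ over $[0,\pi/2]$ is $2/\pi$, giving roughly $4y/\pi^2 > y/\pi$), not by Jensen --- and that computation drags in exactly the folding and parity bookkeeping you flag as the delicate point.

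The correct way to exploit concavity, and the route the paper takes, is the chord bound rather than Jensen: since $\sin$ is concave on $[0,\pi/2]$ and vanishes at $0$, one has $\sin\theta \ge \tfrac{2}{\pi}\theta$ there, while $\sin(\pi/2y)\le \pi/2y$; hence pointwise $d(X_t,Y_t) \ge \tfrac{2}{\pi}\lvert X_t-Y_t\rvert$, where $\lvert X_t-Y_t\rvert$ is the shorter-arc distance, which is all that $d$ depends on, so no folding issues arise. Taking conditional expectations and invoking Proposition \ref{prop:exn2} immediately gives $E(d(X_t,Y_t)\mid X_t\ne Y_t)\ge \tfrac{2}{\pi}\cdot\tfrac{y}{2}=\tfrac{y}{\pi}$, with no asymptotics in $y$ needed (your last step, by contrast, only claims the bound asymptotically).
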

\begin{proof}
From the definition of $d$ in Equation \ref{eq:defd}, we calculate that $d(X_t,Y_t) \geq \frac{2}{\pi}|X_t - Y_t|$.
\end{proof}

\begin{Corollary}
We have that for $c>0$ $$t_{\text{mix}}\left(2e^{-c} \right) \lesssim cy^2\frac{4}{\pi^2}.$$ 
\end{Corollary}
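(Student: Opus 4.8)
The plan is to combine the exponential decay of $E(d(X_t,Y_t))$ from Corollary \ref{cor:couple1} with the lower bound on $E(d(X_t,Y_t) : X_t \neq Y_t)$ from Corollary \ref{cor:exn2} via Markov's inequality, and then invoke the standard coupling inequality that $\|P^{*t} - \pi\|_{T.V.} \leq P(X_t \neq Y_t)$. First I would write
$$E(d(X_t,Y_t)) = E(d(X_t,Y_t) \mid X_t \neq Y_t)\, P(X_t \neq Y_t) \geq \frac{y}{\pi}\, P(X_t \neq Y_t),$$
using Corollary \ref{cor:exn2}. Rearranging gives $P(X_t \neq Y_t) \leq \frac{\pi}{y} E(d(X_t,Y_t))$.

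Next I would substitute the bound from Corollary \ref{cor:couple1}: at $t = cy^2\frac{4}{\pi^2}$ we have $E(d(X_t,Y_t)) \leq e^{-c}\frac{2y}{\pi}$, whence
$$P(X_t \neq Y_t) \leq \frac{\pi}{y} \cdot e^{-c}\frac{2y}{\pi} = 2e^{-c}.$$
Since the coupling is a genuine Markovian coupling of two copies of the chain (one started from the worst-case opposite configuration, as discussed before Remark \ref{rem:initE}), the coupling inequality gives $\|P^{*t} - \pi\|_{T.V.} \leq P(X_t \neq Y_t) \leq 2e^{-c}$. Therefore any $t \gtrsim cy^2\frac{4}{\pi^2}$ suffices to bring the total variation distance below $2e^{-c}$, which is precisely the claimed bound $t_{\text{mix}}(2e^{-c}) \lesssim cy^2\frac{4}{\pi^2}$.

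The only subtlety — and the place where a little care is needed rather than a genuine obstacle — is bookkeeping with the conditioning on $X_t \neq Y_t$ throughout: both corollaries are stated conditionally on $X_t \neq Y_t$, and one must check that the event $d(X_t,Y_t) = 0$ contributes nothing to $E(d(X_t,Y_t))$ (which is immediate) so that the unconditional expectation splits cleanly as above. One should also note that the $\lesssim$ and $\approx$ signs already absorb the first-order approximations made in deriving Corollary \ref{cor:couple1} (the small-angle expansion of $\cos$ and the limit $(1-1/k)^k \to e^{-1}$), so no new approximation is introduced at this final step. Given all the pieces are in hand, this corollary is essentially a one-line assembly.
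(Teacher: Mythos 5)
Your proposal is correct and follows essentially the same route as the paper: bound $P(X_t \neq Y_t)$ by the ratio $E(d(X_t,Y_t))/E(d(X_t,Y_t)\mid X_t\neq Y_t) \leq 2e^{-c}$ using Corollaries \ref{cor:couple1} and \ref{cor:exn2}, then apply the standard coupling inequality (the paper cites Theorem 5.2 of \cite{LPW}) to bound the total variation distance. Your explicit Markov-type rearrangement and the remark about the zero contribution of the event $d(X_t,Y_t)=0$ are just a spelled-out version of the paper's one-line argument.
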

\begin{proof}
From Corollary \ref{cor:couple1} and Corollary \ref{cor:exn2}, we have that $$P(X_t \neq Y_t) \leq \frac{E(d(X_t,Y_t))}{E(d(X_t,Y_t) \vert X_t \neq Y_t)} \leq 2e^{-c}.$$ Theorem 5.2 of \cite{LPW} gives the result.
\end{proof}

\begin{Remark}
\label{rem:58}
In the proof of Proposition \ref{prop:exn2}, a more careful calculation of the limiting behaviour of the $f_t$, which approach a parabolic distribution proportional to $p(k) = k(2y-k)$ would find that the expected value of $|X_t-Y_t|$ is at least $\frac{5y}{8}$. In the proof of Corollary \ref{cor:exn2}, we could bound the ratio $\frac{d(X_t,Y_t)}{|X_t - Y_t|}$ more carefully and improve the result slightly. However either of these improvements would lead only to decreasing the constant factor inside the logarithm.
\end{Remark}

\subsection{Coupling for urns arranged in a cycle}
\label{sec:urnscycle}
We now consider $2p$ urns arranged in a cycle, each urn containing $n$ balls. At each step, $k$ balls at random are selected from each urn and moved to the next urn. We will use a coupling argument based on the techniques of Section \ref{sec:cycle} to find an upper bound for the mixing time of this random walk.

Consider two copies of the system, and label the balls in each with the numbers $1$ to $2np$. We now describe our coupling. 

To begin, we will pair each ball in the first chain with a ball in the same urn in the second chain. For each ball in the first chain, if the ball labelled by that number is in the same urn in the second chain, pair them. For each urn, there are now an equal number of unpaired balls in the first and second chain --- pair them arbitrarily. 

Next, we describe the coupling for one step of the joint chain. In the first chain, choose $k$ balls from each urn, and move each of these balls to the next urn, as usual. For each ball chosen this way, in the second chain, move its partner to the next urn. Restricted to either chain, this step is the usual step of choosing $k$ balls from each urn, so the coupling is valid.

We now forget the pairing, and construct a new pairing according to the same rules, because there now may be more matching pairs of balls, take a step using this new pairing, and repeat. Note that this coupling has the property that whenever two balls have the same number and are in the same urn, they move together or not at all, so matches are never destroyed. 

We now analyse the time required for this coupling to bring the two copies of the chain together, using the techniques of Section \ref{sec:cycle}.

For each $i$, if the two balls labelled by $i$ are in the urns numbered $x_i$ and $y_i$, then as in Equation \ref{eq:defd}, let $$d_i = \frac{\sin(|x_i-y_i|\frac{\pi}{2p})}{\sin(\frac{\pi}{2p})}.$$

Let $D$ be the sum of all $2np$ variables $d_i$. We will perform similar calculations to those of Section \ref{sec:cycle} using $D$. All calculations are to first order in $p$, using the same approximations as in that section.

\begin{Proposition}
The maximum initial value of $D$ is $\frac{4np^2}{\pi}$.
\end{Proposition}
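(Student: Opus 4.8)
The plan is to bound $D$ by bounding each summand $d_i$ and the number of summands that can simultaneously be nonzero. There are $2np$ balls in total (the two chains each have $np$ balls, or in the urn interpretation $2p$ urns of $n$ balls each), so $D$ is a sum of $2np$ terms $d_i$. For each $i$, the quantity $d_i = \sin(|x_i-y_i|\tfrac{\pi}{2p}) / \sin(\tfrac{\pi}{2p})$ is maximised when $|x_i - y_i|$ is as large as possible, namely when the two balls labelled $i$ are in diametrically opposite urns, $|x_i - y_i| = p$; then $\sin(p\cdot\tfrac{\pi}{2p}) = \sin(\tfrac{\pi}{2}) = 1$, so $d_i \le 1/\sin(\tfrac{\pi}{2p})$. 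To first order in $p$ (using $\sin x \approx x$ for small $x$, exactly as in Remark \ref{rem:initE} and the proof of Corollary \ref{cor:couple1}), this is $1/\sin(\tfrac{\pi}{2p}) \approx \tfrac{2p}{\pi}$.

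Multiplying the per-ball bound by the number of balls gives $D \le 2np \cdot \tfrac{2p}{\pi} = \tfrac{4np^2}{\pi}$, which is the claimed bound. The first step is to observe that this bound is actually attainable: if in the first chain every urn holds some particular $n$ balls, and in the second chain those same balls are all shifted to the diametrically opposite urn, then $|x_i - y_i| = p$ for every $i$ simultaneously, so every $d_i$ attains its maximum $\tfrac{2p}{\pi}$ and $D$ equals $\tfrac{4np^2}{\pi}$. (This is the analogue of the "exactly opposite states" configuration used in Section \ref{sec:cycle}, and it is consistent with the coupling's pairing rule, since no ball shares an urn with its partner.) Hence $\tfrac{4np^2}{\pi}$ is the true maximum, not merely an upper bound.

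I do not anticipate a serious obstacle here; the statement is essentially bookkeeping, combining the single-particle estimate from Remark \ref{rem:initE} with the fact that $D$ aggregates $2np$ independent copies of that estimate. The only point requiring any care is the same first-order-in-$p$ approximation convention already in force throughout Section \ref{sec:cycle}, namely replacing $1/\sin(\tfrac{\pi}{2p})$ by $\tfrac{2p}{\pi}$; as noted in Remark \ref{rem:58}, sharpening such approximations would at most change a constant inside a logarithm later, so it is harmless to use the stated form.
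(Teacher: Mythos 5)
Your argument is correct and is essentially the paper's proof: $D$ is a sum of $2np$ terms $d_i$, each bounded (to first order) by $\tfrac{2p}{\pi}$ as in Remark \ref{rem:initE}, giving $\tfrac{4np^2}{\pi}$. Your additional observation that the bound is attained by the diametrically opposite configuration is a harmless extra the paper does not spell out; just note the small slip in your parenthetical --- each chain has $2np$ balls, not $np$, though your count of $2np$ summands is the correct one.
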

\begin{proof}
The quantity $D$ is the sum of $2np$ variables, each of which is at most $\frac{2p}{\pi}$, as in Remark \ref{rem:initE}.
\end{proof}

\begin{Proposition}
\label{prop:exd}
The expected value of $D$ after $t$ steps is $$E(D_t) = \left(\frac{k^2 + (n-k)^2}{n^2} + \frac{2k(n-k)}{n^2}\cos \left(\frac{\pi}{2p} \right)\right)^tD_0.$$
\end{Proposition}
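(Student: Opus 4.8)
The plan is to compute the one-step expected change in $D$ and then iterate. Since $D = \sum_{i=1}^{2np} d_i$, by linearity of expectation it suffices to understand $E(d_i \text{ after one step} \mid \text{current configuration})$ for a single ball $i$, and the key point will be that this expectation is a fixed multiplier times the current $d_i$, uniformly over all configurations; then $E(D_{t+1}) = \lambda E(D_t)$ with $\lambda = \frac{k^2+(n-k)^2}{n^2} + \frac{2k(n-k)}{n^2}\cos(\frac{\pi}{2p})$, and the stated formula follows by induction from $E(D_0) = D_0$.

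First I would fix a ball labelled $i$, sitting in urn $x_i$ in the first chain and urn $y_i$ in the second chain, and analyse how the pair $(x_i,y_i)$ changes in one coupled step. Under the coupling of Section \ref{sec:urnscycle}, a ball and its partner (when they share an urn) move together, but balls $i$ in the two chains need not be partnered with each other. The relevant events are: ball $i$ is selected in the first chain (probability $\frac{k}{n}$, since $k$ of the $n$ balls in each urn are chosen uniformly) and independently ball $i$ is selected in the second chain (probability $\frac{k}{n}$), and these two selections are independent because they happen in different urns (here one uses $x_i \neq y_i$, i.e. $d_i \neq 0$; the case $d_i = 0$ contributes nothing, exactly as in the proof of Proposition \ref{prop:decayE}). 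So with probability $\frac{k^2}{n^2} + \frac{(n-k)^2}{n^2}$ either both move or neither moves, and $|x_i - y_i|$ is unchanged; with probability $\frac{2k(n-k)}{n^2}$ exactly one of the two balls advances by one urn around the cycle, so $|x_i - y_i|$ changes by $\pm 1$. Crucially, when exactly one moves, the displacement goes to $\delta+1$ or $\delta-1$ each with probability $\frac12$ by the symmetry of which chain's ball moved — this is precisely the situation in the proof of Proposition \ref{prop:decayE}, including the boundary case $\delta = p$ handled via Remark \ref{rem:eitherarc}. Applying that computation, $E(d_i \mid \delta) = \big(\frac{k^2+(n-k)^2}{n^2}\big) d_i + \frac{2k(n-k)}{n^2}\cos(\frac{\pi}{2p}) d_i = \lambda d_i$.

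The main obstacle, and the point deserving care, is justifying that the two selection events (ball $i$ chosen in chain one, ball $i$ chosen in chain two) are genuinely independent and each has probability exactly $\frac{k}{n}$, and that when only one ball moves the two directions are equally likely. The probability statement is immediate since the $k$ chosen balls in any given urn form a uniform $k$-subset, so any fixed ball is chosen with probability $\frac{k}{n}$; independence holds because the draws in urn $x_i$ and urn $y_i$ are performed independently and $x_i \neq y_i$. The equal-direction claim follows because moving ball $i$ in chain one sends $(x_i,y_i) \to (x_i+1,y_i)$ while moving it in chain two sends $(x_i,y_i)\to(x_i,y_i+1)$, and these two events have equal probability $\frac{k(n-k)}{n^2}$, so conditioned on exactly one moving, $\delta \mapsto \delta\pm1$ is symmetric — and then the trigonometric identity $\frac12\sin((\delta+1)\tfrac{\pi}{2p}) + \frac12\sin((\delta-1)\tfrac{\pi}{2p}) = \cos(\tfrac{\pi}{2p})\sin(\delta\tfrac{\pi}{2p})$ closes the computation exactly as in Proposition \ref{prop:decayE}. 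Summing over all $2np$ balls gives $E(D_{t+1} \mid D_t) = \lambda D_t$; taking expectations and iterating from the deterministic value $D_0$ yields $E(D_t) = \lambda^t D_0$, which is the assertion.
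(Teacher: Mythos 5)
Your proposal is correct and follows essentially the same route as the paper, which also writes $D=\sum_i d_i$, notes that $d_i$ is unchanged with probability $\frac{k^2+(n-k)^2}{n^2}$ and shrinks in expectation by $\cos\left(\frac{\pi}{2p}\right)$ when exactly one of the two balls labelled $i$ moves, and then invokes the computation of Proposition \ref{prop:decayE}; you merely spell out the details the paper leaves implicit. The only phrasing to be careful with is ``ball $i$ is selected in the second chain'': under the coupling this event means that its partner is selected in the first chain's draw from urn $y_i$, which is exactly why it has probability $\frac{k}{n}$ and is independent of the draw in urn $x_i$, so your conclusion stands as written.
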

\begin{proof}
The quantity $D$ is the sum of $2np$ variables $d_i$. After a single step, each $d_i$ is unchanged if both or neither of the balls with that label are moved, which happens with probability $\frac{k^2 + (n-k)^2}{n^2}$, and decreases in expectation by a factor of $\cos(\frac{\pi}{2p})$ if exactly one of the balls with that label moves, as in Proposition \ref{prop:decayE}.
\end{proof}

\begin{Corollary}
\label{cor:couple2}
After time $$t \approx \frac{4n^2p^2}{\pi^2k(n-k)}\log (16np)+ c  \frac{4n^2p^2}{\pi^2k(n-k)},$$where $c>0$, the expected value of $D$ is at most $\frac{e^{-c}p}{4\pi}$. 
\end{Corollary}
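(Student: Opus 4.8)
The plan is to chain together the two preceding propositions and then carry out the same asymptotic bookkeeping as in Corollary \ref{cor:couple1}. By the proposition bounding the initial value of $D$ we have $D_0 \le \frac{4np^2}{\pi}$, and Proposition \ref{prop:exd} gives $E(D_t) = r^t D_0$ with
$$r = \frac{k^2 + (n-k)^2}{n^2} + \frac{2k(n-k)}{n^2}\cos\left(\frac{\pi}{2p}\right).$$
So it suffices to show that, for the stated value of $t$, we have $r^t \cdot \frac{4np^2}{\pi} \le \frac{e^{-c}p}{4\pi}$, equivalently $r^t \le \frac{e^{-c}}{16np}$.

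First I would rewrite $r$ in a more transparent form. Since $k^2 + (n-k)^2 + 2k(n-k) = n^2$, we get
$$r = 1 - \frac{2k(n-k)}{n^2}\left(1 - \cos\left(\frac{\pi}{2p}\right)\right).$$
Using $1 - \cos(x) \approx \tfrac{x^2}{2}$ for small $x$, with $x = \frac{\pi}{2p}$, this is $r \approx 1 - \frac{\pi^2 k(n-k)}{4n^2 p^2}$ to first order in $p$. Writing $\alpha = \frac{\pi^2 k(n-k)}{4 n^2 p^2}$, the same $(1 - \tfrac{1}{m})^m \approx e^{-1}$ estimate used in Corollary \ref{cor:couple1} gives $r^t \approx e^{-\alpha t}$.

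Finally I would substitute the prescribed value
$$t \approx \frac{1}{\alpha}\left(\log(16np) + c\right) = \frac{4n^2p^2}{\pi^2 k(n-k)}\log (16np) + c\,\frac{4n^2p^2}{\pi^2 k(n-k)},$$
which yields $e^{-\alpha t} = \frac{e^{-c}}{16np}$, and hence $E(D_t) \le \frac{4np^2}{\pi}\cdot \frac{e^{-c}}{16np} = \frac{e^{-c}p}{4\pi}$, as claimed.

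There is no serious obstacle here; the only point requiring a little care is the same one as in Corollary \ref{cor:couple1}, namely that all estimates are first order in $p$. One should note that the error terms hidden in $1-\cos(\pi/(2p)) = \frac{\pi^2}{8p^2} + O(p^{-4})$ and in $(1-\alpha)^t = e^{-\alpha t}(1 + o(1))$ are of lower order, which is why the conclusion is phrased with $\approx$ rather than as an exact inequality, and that the bound is meaningful precisely in the regime where $p$ is large (equivalently $\pi/(2p)$ small), consistent with the hypotheses of Theorem \ref{thm:morepiles}.
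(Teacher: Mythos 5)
Your proposal is correct and follows exactly the route the paper intends: the paper's proof of this corollary is the one-line remark that ``the calculation is the same as that in Corollary \ref{cor:couple1}'', and you have simply carried out that calculation explicitly, combining the bound $D_0 \le \frac{4np^2}{\pi}$ with the contraction factor from Proposition \ref{prop:exd} and the same first-order approximations. The arithmetic checks out, including the final simplification $\frac{4np^2}{\pi}\cdot\frac{e^{-c}}{16np} = \frac{e^{-c}p}{4\pi}$.
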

\begin{proof}
The calculation is the same as that in Corollary \ref{cor:couple1}.
\end{proof}

\begin{reptheorem}{thm:morepiles}(a)
Let $c>0$ then $t_{mix}\left(\frac{e^{-c}}{4}\right) \lesssim \frac{8np^2}{\pi^2k}\log (16np)+ c  \frac{8np^2}{\pi^2k }.$
\end{reptheorem}
\begin{proof}
Noting that Corollary \ref{cor:exn2} applies to each of the $d_i$, this follows from Corollary \ref{cor:couple2} that $$t_{mix}\left(\frac{e^{-c}}{4}\right) \lesssim \frac{4n^2p^2}{\pi^2k(n-k)}\log (16np)+ c  \frac{4n^2p^2}{\pi^2k(n-k)},$$

Using the fact that $k \leq \frac{n}{2}$, this implies the simpler bound 
$$t_{mix}\left(\frac{e^{-c}}{4}\right) \lesssim \frac{8np^2}{\pi^2k}\log (16np)+ c  \frac{8np^2}{\pi^2k }.$$
\end{proof}
As in Remark \ref{rem:58}, it is possible to improve the constant factor inside the logarithm.

The methods of this section also apply when the number of balls moved from each urn is not a constant, but rather is chosen from some probability distribution. For example, if we fix $a \leq \frac{n}{4}$ and then at each step either move $a$ balls from each urn or $2a$ balls from each urn, each possibility with probability $\frac{1}{2}$, then in Proposition \ref{prop:exd}, the expected value of $D$ would at each step be multiplied by the average of the factors $$\left(\frac{k^2 + (n-k)^2}{n^2} + \frac{2k(n-k)}{n^2}\cos \left(\frac{\pi}{2p} \right)\right)$$ with $k=a$ and with $k = 2a$.

\subsection{Lower bounds}
\label{sec:genlower}

In this section, we will consider the same scenario as in Section \ref{sec:urnscycle} --- that is, $2p$ urns arranged in a cycle, each urn containing $n$ balls of the same color. At each step, $k$ balls at random are selected from each urn and moved to the next urn. We will give a lower bound for the mixing time which differs from the upper bound by a factor of $\log(np)$.

\begin{reptheorem}{thm:morepiles}(b)
Let $t= c \frac{2p^2n}{ \pi^2k},$ where $c>0$. Then 
$$ ||P_{x_0}^{*t} - \pi_n||_{T.V.}\leq e^{-c}$$
for every starting configuration $x_0$.
\end{reptheorem}

\begin{proof}

To prove this lower bound, it will be easier to consider the card shuffling model. That is, to take a single step of the chain, a deck of $2pn$ cards is split into $2p$ piles of $n$ cards, each pile is perfectly shuffled, the piles are stacked atop one another, and $k$ cards are moved from the top of the deck to the bottom.

Consider the position of the ace of spades. If the deck of cards is in a random configuration, then the ace of spades must be equally likely to be in any of the $n$--card piles: $\lbrace 1,2,..,n\rbrace, \lbrace n+1,2,..,2n\rbrace,..., \lbrace n(2p-1)+1,n(2p-1)+2,..,2np\rbrace$. Hence the time it takes until the ace of spades is equally likely to be in any of these piles is a lower bound for the mixing time of the entire chain.

The ace of spades starts in the first pile and with probability $\frac{k}{2n}$ it moves to the right, with probability $\frac{k}{2n}$  it moves to the left and with probability $1- \frac{n}{k}$ it stays in the same pile. This defines a random walk on $\mathbb{Z}/2p\mathbb{Z}$ where the pile $g$ containing the ace of spades evolves according to the following measure:

$$P(g_{t+1}-g_t = \delta)= \begin{cases}
\frac{k}{2n} &\mbox{if } \delta= \pm 1 \\ 
1- \frac{n}{k} & \mbox{if } \delta=0.

\end{cases}$$

This model is analysed in Example $2.1$ of \cite{Threads}. We repeat here the analysis for the lower bound:

Recall that
 $$||P^{\ast t} -U||_{T.V.} =\max_{||f||_{\infty} \leq 1} |\widehat{P}^{\ast t}(f) -U(f)|$$
 
 The test function  $f(j)= e^{\frac{2 \pi j}{2p}}=e^{\frac{\pi j}{p}}$ gives that $\widehat{P}^{\ast t}(f)= \frac{n-k}{n} + \frac{k}{n} \cos \frac{\pi}{p}$.
 Therefore,  for $t = c \frac{2p^2n}{ \pi^2k}$ and using the fact that $\cos x= 1 - \frac{x^2}{2} + O(x^4)$ we have that
 $$||P^{\ast t} -U||_{T.V.} \geq |P^{\ast t}(f) -U(f)|= \left|\frac{n-k}{n} + \frac{k}{n} \cos \frac{ \pi}{p} \right|^{t} \approx \left( 1-\frac{ k \pi^2}{2np^2} \right)^{t} \approx e^{-c}$$ 
 
\begin{remark}
The lower bound presented in this section applies to the lazy version of the walk. To provide a similar lower bound for the non-lazy walk, instead of following the ace of spades one can keep track of the difference of the positions of the initial top two cards. This difference also performs a random walk on the cycle similar to the one described above and so a similar lower bound can be obtained.
\end{remark}
\end{proof} 

\section{Preliminaries for the eigenvalue approach}
\label{sec:prelim}
The proof of Theorem \ref{case n/2} is based on finding the eigenvalues and eigenfunctions of the transition matrix. Spherical function theory will give that the eigenvalues are dual Hahn polynomials. Hence, this section provides necessary preliminaries on spherical function theory and orthogonal polynomials for the proof of Theorem \ref{case n/2}.

\paragraph{Dual Hahn Polynomials:}

Firstly we introduce dual Hahn polynomials, which will be used to prove bounds for the eigenvalues of the Markov chain.

Dual Hahn polynomials are defined as follows: 

\begin{definition}
$$R_k(\lambda(i), n )=  {}_3F_2 \left(  \begin{array}{ccc}
-k,  -i, i-2n-1  \\
   -n,-n
\end{array} ; 1 \right)  := \sum^n_{m=0} \frac{(-k)_{\overline{m}} (-i)_{\overline{m}}  (i-2n-1)_{\overline{m}} }{(-n)_{\overline{m}} (-n)_{\overline{m}}  m!}  $$ $$ \quad  i=0,1,2,...,n$$

\end{definition}

where 
$$\lambda(i)= i (i-2n-1) \mbox{ and } (x)_{\overline{m}} = x(x+1)...(x+m-1) .$$

Thus

$$R_k((\lambda(0), n ))=1, R_k((\lambda(1), n ))=1-\frac{2k}{n} $$
$$\mbox{and}$$
$$ R_k((\lambda(2), n ))=1- \frac{2k(2n-1)}{n^2} + \frac{k(k-1)i(i-1) (2n-1)}{ n^2(n-1)}$$

The dual Hahn polynomials satisfy the difference equation 
\begin{equation}\label{difference}
-k R_k(\lambda(i), n )= B(i)R_k(\lambda(i+1), n ) -(B(i)+D(i)) R_k(\lambda(i), n ) + D(i)R_k(\lambda(i-1), n )
\end{equation}

where $B(i)= \frac{(n-i)(i-2n-1)}{2(2i-2n-1)}$ and $D(i)= \frac{i(i-n-1)}{2(2i-2n-1)}$.

These polynomials also satisfy the orthogonality relation
\begin{equation}\label{orthogonality}
\frac{{2n \choose i} - {2n \choose i-1}}{{2n \choose n}} \sum^n_{y=0} R_y(\lambda(i), n )R_y(\lambda(j), n )  {n \choose y} {n \choose n- y}= \delta_{i,j}
\end{equation}
and the recurrence relation: 
\begin{equation}\label{recurrence}
iR_k(i)= (n-k)^2R_{k+1}(i)-((k-n)^2+k^2)R_k(i) +k^2 R_{k-1}(i)
\end{equation} 

One can find the above relations in paragraph $1.6$ of  Koekoek and Swarttouw \cite{Koekoek}. Other sources are Stanton\cite{Stanton} and Karlin-McGregor \cite{Karlin-McGregor}.

\subsection{A symmetry of the urn model}
\label{sec:sym}

One observes that the operation of switching $k$ balls between the two urns is very similar to that of switching $(n-k)$ balls --- they differ only in that which urn is which has been swapped. In this section, we explore some consequences of this observation for the eigenvalues of the chain. In particular, that the eigenvalues are symmetric with respect to replacing $k$ by $n-k$. 

\begin{lemma}\label{sym_hahn}
Let $\beta_k(i) $ denote the $i^{th}$ eigenvalue of the Bernoulli-Laplace urn model for parameter $k$. Then $$\beta_k(i)=  \begin{cases} - \beta_{n-k}(i) & \mbox{if $i$ is odd} \\ \beta_{n-k}(i) & \mbox{if $i$ is even}  \end{cases}$$
\end{lemma}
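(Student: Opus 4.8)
The plan is to exploit the explicit description of the eigenvalues as dual Hahn polynomials. From spherical function theory (as will be developed in Section~\ref{sec:closeupper}), the $i$th eigenvalue $\beta_k(i)$ of the Bernoulli--Laplace chain with parameter $k$ is $R_i(\lambda(k), n)$, the dual Hahn polynomial evaluated at the node $\lambda(k) = k(k-2n-1)$; here I am using the orthogonality relation~\eqref{orthogonality}, which shows the $R_y(\lambda(i),n)$ play a symmetric dual role. So the statement to be proved is the identity
\begin{equation*}
R_i(\lambda(k), n) = (-1)^i R_i(\lambda(n-k), n),
\end{equation*}
and the whole thing reduces to a fact about the polynomials $R_i$, independent of any probabilistic interpretation.

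First I would verify the base cases $i = 0, 1, 2$ directly from the displayed formulas: $R_0 \equiv 1$ is trivially even in the required sense; $R_1(\lambda(k),n) = 1 - \tfrac{2k}{n}$, and replacing $k$ by $n-k$ sends this to $1 - \tfrac{2(n-k)}{n} = -1 + \tfrac{2k}{n} = -R_1(\lambda(k),n)$, giving the odd case; and similarly one checks $R_2$ is invariant under $k \mapsto n-k$ from its explicit expression. Then I would set up an induction on $i$ using the three-term recurrence~\eqref{recurrence}, which I would rewrite with the roles of the two indices exchanged so that $i$ (the degree) is the recursion variable: that is, I would use the difference equation~\eqref{difference} in the variable $i$,
\begin{equation*}
\lambda(k)\, R_i(\lambda(k),n) = B(i) R_{i+1}(\lambda(k),n) - (B(i)+D(i)) R_i(\lambda(k),n) + D(i) R_{i-1}(\lambda(k),n),
\end{equation*}
wait --- I need to be careful which of~\eqref{difference} and~\eqref{recurrence} has $i$ as the free (degree-like) index. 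The clean approach: use whichever three-term relation recurses in the subscript $i$, solve it for $R_{i+1}$ in terms of $R_i$ and $R_{i-1}$ with coefficients that are functions of $\lambda(k)$, and observe the key point $\lambda(n-k) = (n-k)(n-k-2n-1) = (n-k)(-n-k-1) = -(n-k)(n+k+1)$ versus $\lambda(k) = k(k-2n-1) = -k(2n+1-k)$; these are generally unequal, so the eigenvalue argument $\lambda$ is \emph{not} fixed by $k \mapsto n-k$, and the recurrence-in-$\lambda$ route does not immediately close. Hence the recurrence I actually want is~\eqref{recurrence}, which recurses in the degree $k$ (confusingly named) of $R_k(i)$ and has coefficients depending only on $n$ and the degree, not on the node $i$; reading that with the degree renamed to avoid clash, the coefficients are symmetric under the relevant swap, and the induction goes through: if $R_{i-1}$ and $R_i$ satisfy the claimed sign rule, then so does $R_{i+1}$, because the recurrence coefficients $(n-k)^2, (k-n)^2+k^2, k^2$ — here with the degree playing the role of what the paper calls $k$ — are unaffected, while the parity of the index advances by one, flipping the sign exactly as required.

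The main obstacle, and the step I expect to need the most care, is precisely disentangling the two uses of the letter ``$k$'' (the chain parameter versus the polynomial degree) and pinning down which three-term relation among~\eqref{difference},~\eqref{orthogonality}-induced duality, and~\eqref{recurrence} is the one whose coefficients are manifestly invariant under $k \mapsto n-k$. Once that bookkeeping is straight, the induction is routine. An alternative, possibly cleaner, route is to go back to the ${}_3F_2$ definition: write $R_i(\lambda(n-k),n)$ as a hypergeometric sum, apply a standard transformation (reversal of the summation order, or a Whipple-type transformation for terminating ${}_3F_2(1)$ series) to the parameters $-(n-k), -i, i-2n-1$ to convert it back to the series with $-k$ in place of $-(n-k)$, and track the sign $(-1)^i$ that the reversal produces from the $(-i)_{\overline{m}}$ factor. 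I would present the recurrence/induction argument as the main proof and perhaps remark on the hypergeometric identity as the conceptual reason.
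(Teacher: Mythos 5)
There is a genuine gap, and it sits exactly at the point you flagged as ``bookkeeping.'' First, the identification: the $i$th eigenvalue for parameter $k$ is $\beta_k(i)=s_i(k)=R_k(\lambda(i),n)$, i.e.\ the chain parameter $k$ is the \emph{degree} and the eigenvalue index $i$ is the \emph{node}; it is not $R_i(\lambda(k),n)$. With your identification the displayed target identity $R_i(\lambda(k),n)=(-1)^iR_i(\lambda(n-k),n)$ is actually false: already for $i=1$ one has $R_1(\lambda(k),n)=1+\lambda(k)/n^2$, so the claim would force $\lambda(k)+\lambda(n-k)=-2n^2$, whereas $\lambda(k)+\lambda(n-k)=2k^2-2nk-n^2-n$. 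The true statement is $R_k(\lambda(i),n)=(-1)^iR_{n-k}(\lambda(i),n)$, a reflection in the degree with sign governed by the parity of the node.

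Second, and more seriously, your chosen induction mechanism does not close. You rejected the difference equation (Equation \ref{difference}) and opted for the recurrence (Equation \ref{recurrence}) on the grounds that its coefficients do not involve the chain parameter; but in whichever three-term relation you use, the quantity that changes under $k\mapsto n-k$ reappears as the multiplier on the left-hand side, so the sign rule does not propagate ``for free.'' Concretely, with your labelling the step would require $\bigl[\lambda(k)+\lambda(n-k)+2((i-n)^2+i^2)\bigr]R_i(\lambda(n-k),n)=0$, which fails. The paper's proof uses precisely the route you discarded: induct on $i$ using Equation \ref{difference}, where the coefficients $B(i),D(i)$ are independent of $k$ and the parameter enters only as the scalar $-k$ on the left. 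Writing that relation for both $k$ and $n-k$, substituting the inductive hypothesis, and adding or subtracting the two equations, the crucial cancellation comes from the identity $B(i)+D(i)=\tfrac{n}{2}$, so that $-k-(n-k)=-n=-2(B(i)+D(i))$ and one is left with $B(i)\bigl(\beta_{n-k}(i+1)\mp\beta_k(i+1)\bigr)=0$, whence the claim since $B(i)\neq 0$ for $i<n$. This specific cancellation is the heart of the argument and is absent from your proposal, so the induction as you describe it is not ``routine.'' Your alternative suggestion (series reversal or a transformation of the terminating ${}_3F_2$, i.e.\ the classical reflection symmetry of symmetric Hahn polynomials in their argument) is a viable second route, but as written it is only a gesture; to count as a proof you would need to carry out the reversal and track the sign $(-1)^i$ explicitly.
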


\begin{remark}
This is actually a symmetry of the dual Hahn polynomials. After the proof of Lemma \ref{sym_hahn}, we prove a similar result based on this fact.
\end{remark}

\begin{proof}
At first, because of the definition of the dual Hahn polynomials $$\beta_{k}(0)=\beta_{n-k}(0) = 1$$ and $$\beta_{k}(1)= 1 -\frac{2k}{n} = -\beta_{n-k}(1).$$
which satisfy the claim. Equation \ref{difference} and induction will be used to complete the proof. In particular, assume that the claim is true for $k=i$ and for $k=i-1$, then Equation \ref{difference} gives that
\begin{equation}\label{dif}
-k \beta_k(i )= B(i)\beta_k(i+1)) -(B(i)+D(i)) \beta_k(i ) + D(i)\beta_k(i-1 )
\end{equation}
where $B(i)= \frac{(n-i)(i-2n-1)}{2(2i-2n-1)}$ and $D(i)= \frac{i(i-n-1)}{2(2i-2n-1)}$.

If $i$ is odd then (\ref{dif}) becomes
\begin{equation}\label{sec}
k \beta_{n-k}(i )= B(i)\beta_{k}(i+1)) +(B(i)+D(i)) \beta_{n-k}(i ) + D(i)\beta_{n-k}(i-1 )
\end{equation}
Subtracting (\ref{sec}) from (\ref{dif}) for $n-k$ gives that
$$-n \beta_{n-k}(i )= B(i)\left(\beta_{n-k}(i+1) - \beta_{k}(i+1) \right) -2(B(i)+D(i)) \beta_{n-k}(i ) $$
but then 
$$-2(B(i)+D(i)) =-n$$
and therefore 
$$\beta_{n-k}(i+1)) =\beta_{k}(i+1)$$
as desired.

If $i$ is even then (\ref{dif}) becomes
\begin{equation}\label{sec2}
-k \beta_{n-k}(i )= B(i)\beta_{k}(i+1)) -(B(i)+D(i)) \beta_{n-k}(i ) - D(i)\beta_{n-k}(i-1 )
\end{equation}
Adding (\ref{sec2}) to the difference equation (\ref{dif}) for $n-k$ gives 
$$-n \beta_{n-k}(i )= B(i)\left(\beta_{n-k}(i+1) + \beta_{k}(i+1) \right) -2(B(i)+D(i)) \beta_{n-k}(i ) $$
but then 
$$-2(B(i)+D(i)) =-n$$
and therefore 
$$\beta_{n-k}(i+1)) =-\beta_{k}(i+1)$$
as claimed. This completes the proof.
\end{proof}

The following remark connects the symmetry of the eigenvalues given in Lemma \ref{sym_hahn} to the mixing time. It is proved via a different approach to the proof of Lemma \ref{sym_hahn}. 
\begin{remark}\label{sym} 
The walks with parameters $k$ and $n-k$ have the same mixing time. The two walks
have the same eigenvectors and if $\beta_k$ is an eigenvalue of the first walk with respect to the eigenvector
$v$ and $\beta_{n-k}$ is an eigenvalue of the second walk with respect to $v$ then $\beta_{n-k}= \pm \beta_k$.
\end{remark}

\begin{proof}
Denote by $P_k(i,j)$ the probability of starting with $i$ red balls in the right urn and after one step (switching two sets of $k$ balls) ending up with $j$ red balls in the right urn. Then 
$$P_k(i,j)=P_{n-k}(n-i,j)= P_{k}(n-i,n-j)= P_{n-k}(i,n-j)$$

One interpretation of this is that if $A_k=(a_{i,j})$ is the $(n+1) \times (n+1)$ transition matrix of the walk with parameter $k$, then $A_{n-k}= S A_k= A_kS$ where $S$ is the $(n+1) \times (n+1)$ matrix with ones on the antidiagonal and zeros elsewhere \[
   S=
  \left[ {\begin{array}{ccccc}
   0 & 0 & \cdots & 0 & 1 \\
   0 & 0 & \cdots & 1 & 0 \\
   \vdots&  & \ddots & & \vdots \\
   0 & 1 & \cdots & 0 & 0 \\
	 1 & 0 & \cdots & 0 & 0 \\
  \end{array} } \right]
\]

Let \(
   v=
  \left[ {\begin{array}{c}
   v_0  \\
   \vdots \\
   v_n
  \end{array} } \right]
\)
 be a vector, and define \(
\overline{v}  =
  \left[ {\begin{array}{c}
   v_{n}  \\
   \vdots \\
   v_0
  \end{array} } \right]
\) = Sv.

If $v$ is an eigenvector of $A_k$ corresponding to the eigenvalue $\beta_k$ then $A_{n-k}v= SA_k v=
S \beta_k v= \beta_k \overline{v}$ and therefore $(A_{n-k})^2 v= \beta^2_k v$. The fact that both $A_k$ and $A_{n-k}$
are diagonalisable, together with the relation $(A_{n-k})^2 v= \beta^2_k v$ give exactly the result. 
\end{proof}

\begin{remark}
Imitating the above proof shows that the general model where we move $k$ balls from each of $p$ urns (containing $n$ balls each) to the next urn has exactly the same mixing time as the same model where $(n-k)$ balls are moved from each urn to the next.
\end{remark}

\subsection{Eigenvalues and eigenvectors}
\label{sec:eigs}

In Section \ref{sec:closeupper} we will give an upper bound for the 2--pile $k$--cut shuffle when $k=\frac{n}{2} - c$ with $0 \leq c \leq \log_6 n$, using the eigenvectors and eigenvalues of the Markov chain, determined via Gelfand pairs and spherical functions. In this section, we will present necessary background.

Let $G$ be a group and $N$ a subgroup of $G$. If $X=G/N$ then consider the space $L(X)=\lbrace f: X \rightarrow \mathbb{C} \rbrace$.

\begin{definition}
A function $f \in L(X)$ is called \emph{$N$-bi-invariant} if $f(nsr) = f(s)$ for all $s \in G, r,n \in N$.
\end{definition}

\begin{definition}
$(G, N)$ is a \emph{Gelfand pair} if the convolution of $N$-bi-invariant functions is commutative. 
\end{definition}
More details and background on Gelfand pairs can be found in chapter $3$ of \cite{PDbook}. If $(G,N)$ is a Gelfand pair then we have the decomposition
$L(X)= V_1 \oplus V_2 \oplus...\oplus V_m$ where each $(V_i, \rho_i)$ is an irreducible representation of $G$ and no two of the $V_i$ are isomorphic. Moreover, every $V_i$ contains a unique $N$--invariant function $s_i$ with $s_i(id)=1$. The $s_i$ are called spherical functions.

Now, consider an $N$--invariant probability $P$ on $X$. The Fourier transform is defined by
$$\widehat{P}(i)= \sum_{x \in X} s_i(x)P(x)$$ 
and then Lemma $4$ of Chapter $3$ of \cite{PDbook} in the case of our Markov chain says:
$$||P_{id}^t- U||^2 \leq \frac{1}{4} \sum^m_{i=1} d_i |\widehat{P}(i)|^{2t}$$
where $d_i= \dim V_i$.

The above inequality has the following interpretation in terms of the eigenvalues and
eigenvectors of the Markov chain:

\begin{theorem}\label{upper_bound_lemma}
If a reversible Markov chain on a state space $\Omega$ with transition  probability $P(x,y)$ has non-trivial eigenvalues $\beta_j$ and eigenvectors $f_j : \Omega \rightarrow \mathbb{C}$, $j=1,2,...N$ then 
$||P_x^{*t} - \pi_n|| \leq \frac{1}{4}\sum_{j=1} |\beta_j|^{2t} |f_j(x)|^2$.
 
\end{theorem}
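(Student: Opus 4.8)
## Proof plan for Theorem \ref{upper_bound_lemma}

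The plan is to translate the spherical-function bound
$\|P_{id}^t - U\|^2 \le \frac14 \sum_i d_i |\widehat P(i)|^{2t}$
recorded just above into the language of eigenvalues and eigenvectors of a general reversible chain, and then observe that this translation is exactly what the theorem asserts. The key dictionary is: for a Gelfand pair $(G,N)$ with $X = G/N$, the decomposition $L(X) = V_1 \oplus \cdots \oplus V_m$ gives eigenspaces of the averaging operator associated to an $N$-invariant measure $P$; the spherical function $s_i$ has eigenvalue $\widehat P(i)$; and the multiplicity $d_i = \dim V_i$ counts an orthonormal basis of eigenvectors for that eigenvalue. So one should first recall the spectral decomposition of a reversible transition matrix $P(x,y)$ on $\Omega$: since $P$ is reversible with respect to $\pi_n$, it is self-adjoint on $\ell^2(\pi_n)$, hence has real eigenvalues $1 = \beta_0 > \beta_1 \ge \cdots$ with a $\pi_n$-orthonormal basis of eigenfunctions $f_j$.

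Next I would write the standard expansion of the distance to stationarity in terms of this eigenbasis. Writing $P^{*t}_x(y)/\pi_n(y) = \sum_j \beta_j^t f_j(x) \overline{f_j(y)}$ (the reproducing-kernel/spectral expansion for a reversible chain), the $\ell^2(\pi_n)$ bound on total variation gives
\[
4\|P^{*t}_x - \pi_n\|^2 \;\le\; \sum_{y} \frac{|P^{*t}_x(y) - \pi_n(y)|^2}{\pi_n(y)} \;=\; \sum_{j \ge 1} |\beta_j|^{2t} |f_j(x)|^2,
\]
where the $j=0$ term (the constant eigenfunction) is subtracted off as it corresponds to $\pi_n$ itself. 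Dividing by $4$ yields precisely the claimed inequality $\|P_x^{*t} - \pi_n\| \le \frac14 \sum_{j \ge 1} |\beta_j|^{2t}|f_j(x)|^2$. To connect this with the spherical-function statement quoted above, one identifies, for each irreducible $V_i$, an orthonormal basis; the $N$-invariance of $P$ forces $\widehat P(i)$ to be the common eigenvalue on all of $V_i$, and summing $|f_j(x)|^2$ over a basis of $V_i$ at the base point $x = id$ contributes $d_i |\widehat P(i)|^{2t}$ by Schur-orthogonality of the matrix coefficients, recovering $\frac14 \sum_i d_i |\widehat P(i)|^{2t}$.

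The main obstacle is bookkeeping rather than a genuine difficulty: one must be careful that the "$f_j$" in the theorem are the $\ell^2(\pi_n)$-normalised eigenfunctions (so that $\sum_j |f_j(x)|^2 \pi_n(x)$-type identities hold), and that the constant-function eigenvalue $\beta_0 = 1$ is excluded from the sum — this is the role of the word "non-trivial" in the statement. I would also need to note that reversibility is what guarantees the eigenfunctions can be chosen to form an orthonormal basis and the eigenvalues are real, so that $|\beta_j|^{2t}$ makes sense and the spectral expansion of $P^{*t}$ converges correctly. Once these conventions are pinned down, the proof is a direct unwinding of Lemma 4 of Chapter 3 of \cite{PDbook} together with the standard $\ell^1$–$\ell^2$ bound on total variation distance.
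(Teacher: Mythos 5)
Your proposal is correct and follows essentially the same route the paper relies on: the paper gives no independent proof, simply restating Lemma 4 of Chapter 3 of \cite{PDbook} (the spherical-function bound $\|P_{id}^t-U\|^2\leq\frac14\sum_i d_i|\widehat P(i)|^{2t}$) in eigenvalue/eigenvector language, and your direct argument --- reversibility giving a $\pi_n$-orthonormal eigenbasis, the spectral expansion of $P^{*t}_x(y)/\pi_n(y)$, and the $\ell^1$--$\ell^2$ (Cauchy--Schwarz) bound --- is exactly the standard proof underlying that lemma, with the Schur-orthogonality remark correctly reconciling the two formulations. One small point in your favour: you prove the squared form $4\|P^{*t}_x-\pi_n\|_{T.V.}^2\leq\sum_{j\geq1}|\beta_j|^{2t}|f_j(x)|^2$, which is the version the paper actually uses later (the exponent is merely dropped in the theorem's statement), so your bookkeeping is consistent with the intended result.
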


The following lemma provides us with the eigenvalues and eigenvectors of the Bernoulli-Laplace Markov chain:

\begin{lemma}
The eigenvalues of the Bernoulli-Laplace Markov chain are the dual Hahn polynomials evaluated at $k$  \begin{equation}\label{Hahn}
s_r(k)=\sum^r_{m=0} \frac{(-r)_{\overline{m}} (r-2n-1)_{\overline{m}} (-k)_{\overline{m}}}{(-n)^2_{\overline{m}} m!}
\end{equation}
 and the eigenvectors are the normalised dual Hahn polynomials  $$
s_r(x)=  \frac{\sqrt{{2n \choose r} - {2n \choose r-1}}}{{n \choose \frac{n}{2}}{\frac{n}{2} \choose x}^2} \sum^r_{m=0} \frac{(-r)_{\overline{m}} (r-2n-1)_{\overline{m}} (-x)_{\overline{m}}}{(-n)^2_{\overline{m}} m!}
$$
 for each $r=0,1,2,...,n$, where 
$(y)_{\overline{m}} =y(y+1)...(y+m-1)$.

\end{lemma}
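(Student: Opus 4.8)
\emph{Proof proposal.} The plan is to realise the walk as an $N$-bi-invariant random walk on the Gelfand pair $(G,N)=(S_{2n},\,S_n\times S_n)$ and then read off the eigenvalues and eigenvectors from the spherical-function machinery quoted above. Let $X=G/N$ be the set of $n$-element subsets of $[2n]$, fix the base subset $A_0$ of red balls, and identify the coset $gN\leftrightarrow B=g(A_0)$ with the set of balls currently in the right urn; the state ``$i$ red balls in the right urn'' is then exactly the $N$-orbit $\{B:|B\cap A_0|=i\}$, and the transition matrix $K(i,j)$ of the first Lemma is the projection to $N$-orbits of the walk $B\mapsto B'$ on $X$. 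The first step is the elementary observation that one step of the urn walk removes $k$ balls from the right urn and inserts $k$, so $|B\cap B'|=n-k$ always; hence the step measure $P$, started from $B=A_0$, is the uniform probability on the single double coset $\{B':|B'\cap A_0|=n-k\}$, which has $\binom{n}{k}^2$ elements and is clearly $N$-invariant. Therefore the quoted results of Chapter $3$ of \cite{PDbook} apply: $L(X)$ decomposes multiplicity-free as $V_0\oplus\cdots\oplus V_n$ with $\dim V_r=\binom{2n}{r}-\binom{2n}{r-1}$, the eigenvectors of $K$ are the spherical functions $s_r$ (constant on $N$-orbits, hence functions of the state), and the eigenvalues are the Fourier transforms $\widehat P(r)=\sum_x s_r(x)P(x)$.

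Next I would invoke the classical identification of the spherical functions of $(S_{2n},S_n\times S_n)$ --- equivalently the eigenvalues of the relations of the Johnson association scheme $J(2n,n)$ --- with dual Hahn polynomials, as recorded in \cite{Karlin-McGregor}, \cite{Stanton}, \cite{Koekoek} and reviewed in the preliminaries above: normalising so that $s_r$ takes the value $1$ at the identity coset $B=A_0$ (i.e.\ at the state $i=n$), the spherical function on the state $i$ equals $R_r(\lambda(n-i),n)$. Since $P$ is uniform on the single double coset sitting at $|B'\cap A_0|=n-k$, the Fourier sum collapses to a single term, $\widehat P(r)=R_r(\lambda(n-(n-k)),n)=R_r(\lambda(k),n)=s_r(k)$, which is the claimed eigenvalue. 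To put the eigenvector into the stated form $R_r(\lambda(i),n)$ I would use the reflection symmetry $R_r(\lambda(n-i),n)=(-1)^r R_r(\lambda(i),n)$ of the dual Hahn polynomials --- this is precisely the symmetry underlying Lemma \ref{sym_hahn}, and can be proved directly from the difference equation (\ref{difference}) by the same induction --- the overall sign $(-1)^r$ being irrelevant as it may be absorbed into the eigenvector.

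Finally, the normalising constant in front of the eigenvector is forced by the requirement, needed to apply Theorem \ref{upper_bound_lemma}, that the eigenvectors be orthonormal in $L^2(\pi_n)$, that is $\sum_x \pi_n(x) f_r(x)\overline{f_{r'}(x)}=\delta_{rr'}$ with $\pi_n(x)=\binom{n}{x}\binom{n}{n-x}/\binom{2n}{n}$. This is exactly the (dual form of the) orthogonality relation (\ref{orthogonality}): it evaluates $\sum_x \pi_n(x) R_r(\lambda(x),n)^2$ in terms of $\binom{2n}{r}-\binom{2n}{r-1}$, and dividing $R_r$ by the square root of this quantity yields the stated normalisation. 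If one prefers to diagonalise the non-symmetric matrix $K$ directly rather than its $\mathrm{diag}(\sqrt{\pi_n})$-conjugate, an extra factor of $\sqrt{\pi_n(x)}$ reappears in the eigenvector, which accounts for the combinatorial prefactor written in the statement.

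The main obstacle is the second step: pinning down that the spherical functions of this particular Gelfand pair are the dual Hahn polynomials in the normalisation used here, and in particular getting the argument right (the map ``state $i$'' $\mapsto\lambda(n-i)$ versus $\lambda(i)$) together with the sign bookkeeping --- everything else is either quoted (\cite{PDbook}, Theorem \ref{upper_bound_lemma}, \cite{Bernoulli-Laplace}) or a short direct computation (the single-double-coset claim, the collapse of the Fourier sum, and the orthogonality normalisation). An alternative that avoids representation theory would be to verify directly, using the recurrence (\ref{recurrence}) or the difference equation (\ref{difference}), that $R_r(\lambda(\cdot),n)$ is an eigenvector of the explicit kernel $K(i,j)$ with eigenvalue $R_r(\lambda(k),n)$; this trades the Gelfand-pair input for a heavier hypergeometric manipulation of the kernel.
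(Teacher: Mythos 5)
Your proposal is correct and follows essentially the same route as the paper: realise the chain as an $N$-bi-invariant walk for the Gelfand pair $(S_{2n}, S_n\times S_n)$, quote Karlin--McGregor/Stanton to identify the spherical functions with dual Hahn polynomials, observe that the step measure is supported on a single double coset so the Fourier transform collapses to $s_r(k)$, and normalise the eigenvectors via the orthogonality relation. The only cosmetic difference is your choice of base coset, which forces the $(-1)^r$ reflection bookkeeping that the paper avoids by taking the starting configuration itself as the base point.
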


\begin{proof}
Chapter $3$ of \cite{PDbook} states that $(S_{2n}, S_n \times S_n)$ is a Gelfand pair and therefore all we need to do is to find the spherical functions. Also, James (\cite{James}, pg.52) proved that $L(X)= V_1 \oplus V_2 \oplus...\oplus V_m$ where each $(V_i, \rho_i) $ corresponds to the partition $(n-i,i)$. Therefore $L(X)$ breaks down into $\frac{n}{2}$ such representations. 

The spherical functions for the Gelfand pair $(S_{2n}, S_n \times S_n)$ were determined by Karlin and McGregor in \cite{Karlin-McGregor}. In \cite{Stanton} Stanton expresses this result in modern terminology. The spherical functions are exactly the dual Hahn polynomials:
$$
 s_r(x)=\sum^r_{m=0} \frac{(-r)_m (r-2n-1)_m (-x)_m}{(-n)^2_m m!}
$$
  The eigenvalues of the Markov chain are given by the Fourier transform:
 $$\widehat{P}(i)= \sum_{x \in X} s_i(x)P(x)= s_i(k) \sum_{x \in X} P(x)= s_i(k)$$
 where the $s_i(k)$ can be factored out because of its  $N$-invariance. 

Letac \cite{Letac} explains how if $(G,N)$ is a Gelfand pair and $P$ is an $N-$invariant probability measure on $G$ then $\widehat{P}(\rho)$ is diagonalizable by the spherical functions. Theorem $6$ of \cite{PDbook} (page $49$) and the remark afterwards also confirm that for the Markov chain studied in this chapter, the eigenfunction corresponding to the eigenvalue $s_i(k) $ is exactly $s_i(x)$.  

\end{proof}

\section{The case of two piles}\label{two p}
\subsection{Upper bounds for $k$ close to $\frac{n}{2}$}\label{sec:closeupper}

In this section, we prove Theorem \ref{case n/2}, an upper bound for the mixing time of the two pile $k$--cut shuffle when $k$ is very close to $\frac{n}{2}$ --- that is, almost half of the balls are being moved between the two urns at each step.

We first explicitly find the eigenvalues of the walk for the cases $k=\frac{n}{2}$ and $k=\frac{n}{2}-1$.

\begin{lemma}
When $k= \frac{n}{2}$, the eigenvalues are
$$\beta_i \left( \frac{n}{2} \right)= \begin{cases} 0 & \mbox{if $i$ is odd} \\ \frac{(i-1)(i-3)(i-5)...1 (n+2-i)(n+4-i)...n }{(n-i+1)(n-i+3)...(n-1)(i-2n-2)(i-2n-4)...(-2n)} & \mbox{if $i$ is even}  \end{cases}$$
\end{lemma}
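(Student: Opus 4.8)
The plan is to evaluate the dual Hahn polynomial $s_i(\lambda(i'),n)$ at the point $k=\frac{n}{2}$ and extract a closed form. I would start from the hypergeometric/sum representation
$$s_r(k)=\sum^r_{m=0} \frac{(-r)_{\overline{m}} (r-2n-1)_{\overline{m}} (-k)_{\overline{m}}}{(-n)^2_{\overline{m}}\, m!},$$
but rather than attack the sum directly, I would exploit the three-term recurrences already available. Two tools are at hand: the difference equation \eqref{difference} in the variable $i$ (the degree/eigenvalue index), and the symmetry Lemma \ref{sym_hahn}, which for $k=\frac{n}{2}$ says $\beta_{n/2}(i)=-\beta_{n-k}(i)=-\beta_{n/2}(i)$ whenever $i$ is odd, forcing $\beta_i(\tfrac n2)=0$ for all odd $i$. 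That disposes of the odd case immediately and is the cleanest part of the argument.

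For the even case, I would set up an induction on even $i$ using the difference equation \eqref{difference} together with the fact, used repeatedly in the proof of Lemma \ref{sym_hahn}, that $B(i)+D(i)=\frac n2$. Write $\beta_i:=\beta_i(\tfrac n2)$. When $i$ is odd, $\beta_i=0$ and $\beta_{i-1}$ is the previous even eigenvalue, so \eqref{difference} at index $i$ collapses to a relation expressing $\beta_{i+1}$ directly in terms of $\beta_{i-1}$:
$$0 = B(i)\beta_{i+1} - \tfrac n2\cdot 0 + D(i)\beta_{i-1},$$
hence
$$\beta_{i+1} = -\frac{D(i)}{B(i)}\,\beta_{i-1} = -\frac{i(i-n-1)}{(n-i)(i-2n-1)}\,\beta_{i-1}.$$
So the even eigenvalues satisfy a simple first-order (geometric-type) recurrence with ratio $-\frac{i(i-n-1)}{(n-i)(i-2n-1)}$ for $i$ odd, starting from $\beta_0=1$. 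Telescoping this product over odd $i=1,3,5,\dots$ up to $i-1$ is exactly what produces the claimed expression
$$\beta_i\!\left(\tfrac n2\right)= \frac{(i-1)(i-3)\cdots 1\,(n+2-i)(n+4-i)\cdots n}{(n-i+1)(n-i+3)\cdots(n-1)\,(i-2n-2)(i-2n-4)\cdots(-2n)}.$$
I would verify the base case $\beta_2(\tfrac n2)$ against the explicit formula for $R_k(\lambda(2),n)$ given earlier (with $i$ and $k$ suitably specialised), and then the telescoping identification is a bookkeeping check: the numerator factor $(i-1)(i-3)\cdots 1$ comes from the $D$-numerators $i(i-n-1)$ restricted to... more precisely one matches $-D(i)$ to the pair of factors contributed to numerator and denominator and confirms the running product of $-D(j)/B(j)$ over odd $j<i$ reorganises into the stated ratio of "every other term" products.

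The main obstacle I anticipate is purely organisational rather than conceptual: correctly tracking which factors of $-D(j)/B(j)=-\frac{j(j-n-1)}{(n-j)(j-2n-1)}$, accumulated over $j=1,3,\dots,i-1$, land in the numerator versus the denominator of the final formula, and checking that the sign $(-1)^{i/2}$ from the product of the minus signs is absorbed correctly (note the denominator factors $(i-2n-2)(i-2n-4)\cdots(-2n)$ are all negative, which is where the signs go). A secondary subtlety is that the difference equation \eqref{difference} needs the induction hypothesis at two consecutive indices, but since the odd-index values vanish, only the even-index hypothesis is actually doing work, so the induction is really single-step on the even subsequence — I would phrase it that way to keep it clean. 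Once the recurrence $\beta_{i+1}=-\frac{D(i)}{B(i)}\beta_{i-1}$ and the base case are in place, the closed form follows by an explicit (if tedious) telescoping, completing the proof.
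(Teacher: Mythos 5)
Your proposal is correct and follows essentially the same route as the paper: the paper also uses the difference equation to collapse to the two-step recurrence $\beta_{i+1}=\frac{i(n+1-i)}{(n-i)(i-2n-1)}\beta_{i-1}$ at $k=\frac{n}{2}$ and telescopes from $\beta_0=1$ to get the even-index formula. The only (harmless) difference is the odd case: the paper observes directly that the left-hand coefficient of the difference equation vanishes identically at $k=\frac{n}{2}$, so the recurrence holds for all $i$ and $\beta_1=1-\frac{2k}{n}=0$ propagates to all odd indices, whereas you invoke the symmetry Lemma \ref{sym_hahn} with $k=n-k$ — both are fine.
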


\begin{proof}
Since the eigenvalues of the walk are the dual Hahn polynomials, they satisfy the following difference relation (Equation \ref{difference}):
$$-k\beta_i(k,n)= B(i) \beta_{i+1}(k,n) - (B(i)+D(i))\beta_i(k,n) + D(i) \beta_{i-1}(k,n)$$
where $B(i)= \frac{(n-i)(i-2n-1)}{2(2i-2n-1)}$ and $D(i)= \frac{i(i-n-1)}{2(2i-2n-1)}$.
Therefore \begin{multline*}
(-2k(2i-2n-1)+ (n-i)(i-2n-1)+ i(i-n-1))\beta_i(k,n) \\ = (n-i)(i-2n-1) \beta_{i+1}(k,n) + i(i-n-1) \beta_{i-1}(k,n) \end{multline*}
For $k=\frac{n}{2}$ the left hand side is equal to zero and so
$$\beta_{i+1}(k,n)= \frac{i(n-i+1)}{(n-i)(i-2n-1)} \beta_{i-1}(k,n) $$
For general $k$, the first eigenvalue is $\beta_1(k,n)= 1-\frac{2k}{n}$ so  $\beta_1(\frac{n}{2},n)=0$. This gives that the
odd eigenvalues are all equal to zero. 

At the same time, $\beta_0(k,n)=1$ and so induction gives that $$\beta_i=\frac{(i-1)(i-3)(i-5)\cdots1\cdot(n+2-i)(n+4-i)\cdots n }{(n-i+1)(n-i+3)\cdots(n-1)(i-2n-2)(i-2n-4)\cdots(-2n)}$$ in the case where $i$ is even.

\end{proof}

\begin{lemma}\label{eig_bound}
The eigenvalues of the walk for $k=\frac{n}{2}$ satisfy $|\beta_i| \leq \frac{1}{n}$. 
\end{lemma}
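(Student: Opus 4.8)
The plan is to work directly with the closed-form expression for the even eigenvalues obtained in the previous lemma, namely
\[
\beta_i = \frac{(i-1)(i-3)\cdots 1 \cdot (n+2-i)(n+4-i)\cdots n}{(n-i+1)(n-i+3)\cdots(n-1)\cdot(2n+2-i)(2n+4-i)\cdots(2n)},
\]
where I have rewritten the denominator factor $(i-2n-2)(i-2n-4)\cdots(-2n)$ as $\pm(2n+2-i)(2n+4-i)\cdots(2n)$ (the sign is irrelevant since we bound $|\beta_i|$). Since the odd eigenvalues vanish, it suffices to treat even $i$ with $2 \le i \le n$. The idea is to pair the factors in the numerator against factors in the denominator so that each ratio is at most $1$, except for one leftover factor which supplies the $\tfrac1n$.

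\textbf{Key steps.} First I would count factors: for even $i$, the numerator has $i/2$ factors from the ``odd descending'' part $(i-1)(i-3)\cdots 1$ and $(i/2)$ factors from $(n+2-i)(n+4-i)\cdots n$, for a total of $i$ factors; the denominator likewise has $i/2$ factors in $(n-i+1)(n-i+3)\cdots(n-1)$ and $i/2$ in $(2n+2-i)\cdots(2n)$, again $i$ factors. Next, compare term by term: each factor $(2n+2-i),(2n+4-i),\dots,(2n)$ in the denominator is at least as large as the corresponding factor $(n+2-i),(n+4-i),\dots,(n)$ in the numerator (they differ by exactly $n$), so that block of the ratio is $\le 1$; in fact the ratio of the last such pair is $\tfrac{n}{2n}=\tfrac12$. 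Then compare the numerator block $(i-1)(i-3)\cdots 3 \cdot 1$ with the denominator block $(n-1)(n-3)\cdots(n-i+1)$: since $i \le n$, each factor $i-1, i-3, \dots$ is at most the matching factor $n-1, n-3, \dots$, so this block of the ratio is $\le 1$ as well, with one factor — namely $\tfrac{1}{n-i+1}$ against the leftover numerator $1$ — contributing at most $\tfrac{1}{n-i+1} \le \tfrac1{?}$. Combining, $|\beta_i|$ is bounded by the product of these per-factor ratios; being slightly careful about exactly which leftover factor is kept, one gets $|\beta_i| \le \tfrac12 \cdot \tfrac{1}{n-i+1}$ in the worst case $i=n$ this is $\tfrac12$, which is not yet $\tfrac1n$, so I would instead keep the leftover factor $\tfrac{n+2-i}{n-i+1}$ paired differently and extract $\tfrac1n$ from the genuinely small factor. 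The cleanest route: write $\beta_i = \prod_{j} \tfrac{a_j}{b_j}$ with an explicit bijection between numerator and denominator factors chosen so that all but one ratio is $\le 1$ and the remaining ratio is $\le \tfrac1n$; the factor $\tfrac{1}{(-2n)}$ paired with the numerator factor $1$ (when $i=2$, $\beta_2 = \tfrac{1\cdot n}{(n-1)(-2n)}$, which has absolute value $\tfrac{1}{2(n-1)} \le \tfrac1n$ for $n\ge 2$) suggests pairing the smallest numerator factor against a factor of size $\ge 2n$, i.e.\ $|\beta_i| \le \tfrac{\text{largest numerator factor}}{2n} \cdot (\text{rest} \le 1) \le \tfrac{n}{2n} = \tfrac12$; so a second small factor must be used.

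\textbf{Main obstacle.} The real work is the bookkeeping: I need to choose the matching between the $i$ numerator and $i$ denominator factors so that the product of ratios telescopes to something $\le \tfrac1n$ uniformly in $i \in \{2,4,\dots,n\}$, and verify the edge cases $i=2$ and $i=n$ by hand. I expect the correct pairing is: match $(n+2j-i) \leftrightarrow (2n+2j-i)$ for $j=1,\dots,i/2-1$ (each ratio $<1$), match the remaining numerator factor $n$ with denominator factor $2n$ (ratio $\tfrac12$), and match the odd-block numerator factors $i-1,i-3,\dots,1$ with $n-1,n-3,\dots,n-i+1$ where the pair $1 \leftrightarrow (n-i+1)$ gives $\tfrac{1}{n-i+1}$ and all others give ratios $<1$; this yields $|\beta_i| \le \tfrac{1}{2(n-i+1)}$, which is $\le \tfrac1n$ precisely when $n-i+1 \ge n/2$, i.e.\ $i \le n/2+1$. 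For larger $i$ the bound $\tfrac{1}{2(n-i+1)}$ is too weak, so there I would instead \emph{not} extract the $\tfrac12$ from the $n/2n$ pair but rather exploit that when $i$ is large, more of the odd-block ratios $\tfrac{i-(2\ell-1)}{n-(2\ell-1)}$ are bounded well below $1$ — specifically their product is $\tfrac{(i-1)!!\,(n-i)!!}{(n-1)!!}$, which one can show is $O(1/n)$ by a short estimate (e.g.\ Stirling or a direct induction on $i$). So the main obstacle is handling the regime $i$ close to $n$, where one must use the product of several small odd-block ratios rather than a single factor; once that estimate is in place the remaining regimes follow from the elementary per-factor comparison above.
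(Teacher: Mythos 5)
There is a genuine gap, and it sits exactly where you located the difficulty: the regime $i$ close to $n$. Your basic pairing (keep the factors $\frac{1}{n-i+1}$ and $\frac{n}{2n}$, bound every other ratio by $1$) gives $|\beta_i|\le \frac{1}{2(n-i+1)}$, which as you say is too weak for $i$ near $n$. But the patch you propose for that regime does not work: the product of the odd-block ratios is
\[
\frac{(i-1)(i-3)\cdots 1}{(n-1)(n-3)\cdots(n-i+1)}=\frac{(i-1)!!\,(n-i-1)!!}{(n-1)!!},
\]
and this is \emph{not} $O(1/n)$ uniformly in the large-$i$ regime: at $i=n$ (with $n$ even) every ratio $\frac{i-(2\ell-1)}{n-(2\ell-1)}$ equals $1$ and the product is exactly $1$. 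Your heuristic that ``when $i$ is large, more of the odd-block ratios are bounded well below $1$'' is backwards --- as $i\to n$ those ratios tend to $1$, and all of the smallness at $i=n$ comes from the \emph{other} block, whose ratios $\frac{n+2j-i}{2n+2j-i}$ are each about $\frac12$ (indeed $|\beta_n|=\frac{2\cdot4\cdots n}{(n+2)(n+4)\cdots(2n)}$ is exponentially small). So the key estimate you defer to ``Stirling or a direct induction'' is false at the endpoint, and the argument as written proves nothing for $i$ in a neighbourhood of $n$.

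The repair is a small but essential change of bookkeeping, and it is what the paper does. Multiply through by $n$ and absorb that factor by writing the denominator factor $2n$ as $2\cdot n$, so that it suffices to show
\[
\frac{(i-1)(i-3)\cdots1\cdot(n+2-i)(n+4-i)\cdots n}{(n-i+1)(n-i+3)\cdots(n-3)\cdot 2(n-1)\cdot(2n+2-i)(2n+4-i)\cdots(2n-2)}\;\le\;1 ,
\]
and then pair with a \emph{shift}: $n+j-i\le 2n+j-i$, $n\le 2(n-1)$, and crucially $i-j\le n-j-2$ for $1\le j\le i-3$, i.e.\ pair $i-1$ with $n-3$, $i-3$ with $n-5$, \dots, $3$ with $n-i+1$, leaving the numerator factor $1$ unpaired; this works uniformly for $2<i\le n-2$ (the inequality $i-j\le n-j-2$ is exactly the restriction $i\le n-2$). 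The three boundary cases $i=2$, $i=n$ ($n$ even) and $i=n-1$ ($n$ odd) are then checked directly by hand. Your plan needs either this shifted pairing or an explicit use of the $\frac{n+2j-i}{2n+2j-i}$ block for $i$ near $n$; without one of these the proof is incomplete.
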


\begin{proof}
First of all, if $n$ is even then $$n |\beta_n |= \frac{2\cdot 4\cdots(n-1)n\cdot n}{(n+2)(n+4)\cdots(2n)} = \frac{4\cdot 6\cdots n}{(n+2)\cdots(2n-2)} \leq 1$$ since $j \leq n+j-2$. 

If $n$ is odd then 
\begin{align*}
n|\beta_{n-1}|& = \frac{3 \cdot 5\cdots(n-4)(n-2)3 \cdot 5\cdots(n-2)n\cdot n}{2 \cdot 4 \cdots(n-3)(n-1)(n+3)(n+5)\cdots(2n-2)2n}\\
&= \frac{3\cdot 5\cdots(n-2)3\cdot 5\cdots(n-2)n}{4 \cdot 6\cdots(n-1) 4(n+3)\cdots(2n-2)}\\
& \leq 1.\\
\end{align*}
 
 because $j\leq j+1$, $3 \leq 4$ and $j\leq n+j-2$. 
 
 Moreover, $|\beta_2|= \frac{1}{2(n-1)} \leq \frac{1}{n}$.
 
 We now consider the cases $2 < i \leq n-2$. 
 
 We would like to show that the quantity 
	
 $$n|\beta_i|=\frac{(i-1)(i-3)(i-5)\cdots1\cdot(n+2-i)(n+4-i)\cdots n\cdot n}{(n-i+1)(n-i+3)\cdots(n-1)(2n+2-i)(2n+4-i)\cdots(2n)} $$ is at most one. This is equivalent to 
 $$
  \frac{(i-1)(i-3)(i-5)\cdots1\cdot(n+2-i)(n+4-i)\cdots n }{(n-i+1)(n-i+3)\cdots(n-3)2(n-1)(2n+2-i)(2n+4-i)\cdots(2n-2)}  \leq 1,$$
  which is true because
  $n+j-i \leq 2n +j-i$ for $2 \leq j \leq i-2$, $n \leq 2(n-1)$ and $i-j \leq n-j-2$ for $1 \leq j \leq i-3$.
\end{proof}

The next theorem shows that for $k=\frac{n}{2}$, $t=3$ steps are enough to ensure mixing. 
\begin{reptheorem}{case n/2}(Case $k=\frac{n}{2}$)

For the case $k=\frac{n}{2}$ and  $t \geq 3$ then
$$4||P_0^{*t} - \pi_n||^2_{T.V.} \leq   \frac{\pi^2}{6 n^{2t-4}}$$
\end{reptheorem}

\begin{proof}
By Theorem \ref{upper_bound_lemma}, because the starting configuration has zero red balls in the right urn,
the total variation distance is bounded by:
\begin{align*}
4||P_0^{*t} - \pi_n||^2_{T.V.} & \leq\sum^{n}_{i=1} |\beta_i|^{2t} |s_i(0)|^2 \\
& = \sum^n_{i=2\mbox{, $i$ even} } \left(\frac{(i-1)(i-3)(i-5)\cdots1\cdot(n+2-i)(n+4-i)\cdots n }{(n-i+1)(n-i+3)\cdots(n-1)(i-2n-2)(i-2n-4)\cdots(-2n)}\right)^{2 t}\\
& \hspace{10cm} \cdot \left({2n \choose i} - {2n \choose i-1}\right)\\ 
& \leq  \frac{\pi^2}{6 n^{2t-4}}\\
\end{align*}

The last step is true because if $t \geq 3$ then $$\left(\frac{(i-1)(i-3)(i-5)\cdots1\cdot(n+2-i)(n+4-i)\cdots n }{(n-i+1)(n-i+3)\cdots(n-1)(i-2n-2)(i-2n-4)\cdots(-2n)}\right)^{2t} \cdot \left({2n \choose i} - {2n \choose i-1} \right) \leq \frac{1}{n^{2t-4}}.$$ In more detail, $${2n \choose i} - {2n \choose i-1}= \frac{(2n)! (2n-2i+1)}{i!(2n-i+1)!}$$

which allows us to calculate as follows 
\begin{align*}
&\left(\frac{(i-1)(i-3)(i-5)\cdots1\cdot(n+2-i)(n+4-i)\cdots n }{(n-i+1)(n-i+3)\cdots(n-1)(i-2n-2)(i-2n-4)\cdots(-2n)}\right)^{2t} \cdot \left({2n \choose i} - {2n \choose i-1}\right) \\
& = \left(\frac{(i-1)(i-3)(i-5)\cdots1\cdot(n+2-i)(n+4-i)\cdots n }{(n-i+1)(n-i+3)\cdots(n-1)(2n+2-i)(2n+4-i)\cdots(2n)}\right)^{2t} \cdot \left(\frac{(2n)! (2n-2i+1)}{i!(2n-i+1)!}\right)\\
&\leq 
 \left(\frac{(i-1)(i-3)(i-5)\cdots1\cdot(n+2-i)(n+4-i)\cdots n }{(n-i+1)(n-i+3)\cdots(n-1)(2n+2-i)(2n+4-i)\cdots(2n)}\right)^{2t} \\
&\hspace{7cm}\cdot\left(\frac{(2n-i+2)(2n-i+3)\cdots(2n) (2n-2i+1)}{i!}\right) \\ 
& \leq \left(\frac{ (n+2-i)(n+4-i)\cdots n }{(n-i+1)(n-i+3)\cdots(n-1)}\right)^{2t} \left(\frac{(i-1)(i-3)(i-5)\cdots1\cdot }{(2n+2-i)(2n+4-i)\cdots(2n)}\right)^{2t-1} \\ &\hspace{7cm}\cdot\left(\frac{(2n-i+1)(2n-i+3)\cdots(2n-1)}{i(i-2)\cdots2}\right) \\
& \leq \left(\frac{ (n+2-i)(n+4-i)\cdots n }{(n-i+1)(n-i+3)\cdots(n-1)}\right)^{2t} \left(\frac{(i-1)(i-3)(i-5)\cdots1\cdot }{(2n+2-i)(2n+4-i)\cdots(2n)}\right)^{2t-2} \\
& \leq \beta^{2t-2}_i \left(\frac{(n+2-i)(n+4-i)\cdots n}{(n+1-i)(n+3-i)\cdots(n-1)}\right)^2 \\
& \leq \beta^{2t-2}_i  \frac{n^2}{(n-i+1)^2} \\
\end{align*}
and using Lemma \ref{eig_bound} we get that 
$$4||P_0^{*t} - \pi_n||^2_{T.V.} \leq \sum^n_{i=2, i \mbox{ even}} \frac{1}{n^{2t-2}} \frac{n^2}{(n-i+1)^2}
\leq  \frac{\pi^2}{6 n^{2t-4}}.$$

\end{proof}

We will now show that the mixing time for $k=\frac{n}{2}-1$ is also finite and independent of $n$.

\begin{lemma}
If $n$ is even and $k= \frac{n}{2}-1$ then the eigenvalues are:
$$\beta_i\left(\frac{n}{2}-1\right)= \begin{cases} -\frac{2}{n(2n-1)} & \mbox{if } i=1\\
\frac{2i(i-2)\cdots1\cdot(n+3-i)(n+5-i)\cdots(n-2)(n+1-i)}{(2i-2n-1)n(n-i+2)(n-i+4)\cdots(n-1)(i-2n-3)(i-2n-5)\cdots(-2n+2)(-2n)} & \mbox{if } i \mbox{ is odd} \\ \frac{(2i+n^2)(i-1)(i-3)(i-5)\cdots1\cdot(n+2-i)(n+4-i)\cdots n }{n^2(n-i+1)(n-i+3)\cdots(n-1)(i-2n-2)(i-2n-4)\cdots(-2n)} & \mbox{if } i \mbox{ is even}  \end{cases}$$

Each of these eigenvalues satisfies $|\beta_i\left(\frac{n}{2}-1\right)| \leq \frac{1}{n}$.
\end{lemma}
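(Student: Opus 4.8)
The plan is to follow the strategy already used for $k=\frac n2$. The eigenvalues $\beta_i\bigl(\frac n2-1\bigr)$ are dual Hahn polynomials, hence satisfy both the difference equation (Equation \ref{difference}) in the index $i$ and the three-term recurrence (Equation \ref{recurrence}) in the index $k$; combining these with the symmetry of Lemma \ref{sym_hahn} and the eigenvalues at $k=\frac n2$ computed above pins down every eigenvalue, with no new induction beyond that case.

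For even $i$, specialise Equation \ref{recurrence} at $k=\frac n2$, where $n-k=k=\frac n2$, so that it reads $i\,\beta_i(\tfrac n2)=\tfrac{n^2}{4}\beta_i(\tfrac n2+1)-\tfrac{n^2}{2}\beta_i(\tfrac n2)+\tfrac{n^2}{4}\beta_i(\tfrac n2-1)$. Lemma \ref{sym_hahn} gives $\beta_i(\frac n2+1)=\beta_i(\frac n2-1)$ for even $i$, so the recurrence collapses to
$$\beta_i\Bigl(\tfrac n2-1\Bigr)=\frac{2i+n^2}{n^2}\,\beta_i\Bigl(\tfrac n2\Bigr),$$
which is where the factor $2i+n^2$ in the statement comes from; substituting the closed form for $\beta_i(\frac n2)$ from the earlier lemma gives the claimed even-$i$ formula.

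For odd $i$ this fails because $\beta_i(\frac n2)=0$, so instead return to the difference equation (Equation \ref{difference}) in the cleared-denominator form used in the $k=\frac n2$ proof and substitute $k=\frac n2-1$; the coefficient of $\beta_i$ on the left simplifies to $2(2i-2n-1)$, giving
$$2(2i-2n-1)\,\beta_i=(n-i)(i-2n-1)\,\beta_{i+1}+i(i-n-1)\,\beta_{i-1}.$$
For odd $i$ the neighbours $i\pm1$ are even, hence already known (with $\beta_0=1$ used when $i=1$), so solving for $\beta_i$ and simplifying the product of ratios gives the stated odd-$i$ formula; I would check the case $i=1$ by hand, since there several telescoping products are empty, recovering $\beta_1=-\frac{2}{n(2n-1)}$.

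Finally, for $|\beta_i(\frac n2-1)|\le\frac1n$ I would imitate Lemma \ref{eig_bound}: write $n\,|\beta_i|$ as an explicit quotient of products and cancel factors in pairs via inequalities like $n+j-i\le 2n+j-i$, $i-j\le n-j-2$ and $n\le 2(n-1)$, keeping separate control of the extra factors $2i+n^2$ and $n+1-i$ that were absent for $k=\frac n2$; the extreme indices $i\in\{1,2,n-1,n\}$ are treated as special cases, exactly as there. I expect the main obstacle to be the bookkeeping: matching the $\cdots$-products produced by the two recurrences to the closed forms in the statement (the degenerate cases at $i=1$ and at $i$ near $n$ need attention), and then checking that the new factors $2i+n^2$ and $n+1-i$ do not spoil the uniform bound $\frac1n$.
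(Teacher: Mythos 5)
Your proposal follows essentially the same route as the paper: the in-$k$ recurrence (Equation \ref{recurrence}) specialised at $k=\frac{n}{2}$ together with the $k\leftrightarrow n-k$ symmetry yields the even-$i$ eigenvalues via the factor $\frac{2i+n^2}{n^2}$, the cleared-denominator difference equation (Equation \ref{difference}) with coefficient $2(2i-2n-1)$ then gives the odd-$i$ eigenvalues from their even neighbours (with $i=1$ handled separately), and the bound $|\beta_i|\leq\frac{1}{n}$ is obtained by imitating Lemma \ref{eig_bound}. The only cosmetic differences are that you invoke Lemma \ref{sym_hahn} directly where the paper argues via Remark \ref{sym}, and you recover $\beta_1$ from the difference equation rather than by substitution into Equation \ref{Hahn}.
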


\begin{proof}
$\\$
Using the recurrence relation that the dual Hahn polynomials satisfy for $k=\frac{n}{2}$, one can find the 
eigenvalues for $k=\frac{n}{2}-1$. Namely for $k=\frac{n}{2}$ the relation 

$$i \beta_k (i)= (n-k)^2\beta_{k+1}(i)-((k-n)^2+k^2)\beta_k(i) +k^2 \beta_{k-1}(i)$$ gives that

$$i \beta_{\frac{n}{2}}(i)= \frac{n^2}{4}\beta_{\frac{n}{2}+1}(i)-\frac{n^2}{2}\beta_k(i) +\frac{n^2}{4}2 \beta_{\frac{n}{2}-1}(i).$$

The fact that $\beta_{\frac{n}{2}}(i)=0$ for odd $i$ implies that $\beta_{\frac{n}{2}-1}(i)=- \beta_{\frac{n}{2}+1}(i)$, while the fact that
$\beta_{\frac{n}{2}-1}(i) \neq 0$ in combination with remark \ref{sym} imply that for even $i$
$\beta_{\frac{n}{2}-1}(i)=  \beta_{\frac{n}{2}+1}(i) $ and therefore 

\begin{align*}
\beta_{\frac{n}{2}-1}(i) & = \frac{2(i + \frac{n^2}{2})}{n^2} \beta_{\frac{n}{2}}(i) \\
 & =\frac{(2i+n^2)(i-1)(i-3)(i-5)\cdots1\cdot(n+2-i)(n+4-i)\cdots n }{n^2(n-i+1)(n-i+3)\cdots(n-1)(i-2n-2)(i-2n-4)\cdots(-2n)}.\\ \end{align*}

Now we use the difference equation (Equation \ref{difference}) to get the odd eigenvalues. Namely,
the relation
\begin{multline*}(-2k(2i-2n-1)+ (n-i)(i-2n-1)+ i(i-n-1))\beta_i(k,n)\\=(n-i)(i-2n-1) \beta_{i+1}(k,n) + i(i-n-1) \beta_{i-1}(k,n)\end{multline*}

gives that for $k=\frac{n}{2}-1$ and $i \geq 3$ odd the eigenvalues are

$$\beta_i= \frac{2i(i-2)\cdots1\cdot(n+3-i)(n+5-i)\cdots(n-2)(n+1-i)}{(2i-2n-1)n(n-i+2)(n-i+4)\cdots(n-1)(i-2n-3)(i-2n-5)\cdots(-2n+2)(-2n)}.$$

The value of $\beta_1$ is obtained by substituting $i=1$ and $k=\frac{n}{2}-1$ into Equation \ref{Hahn}.

The proof of the fact that for $k = \frac{n}{2}-1$, each eigenvalue satisfies $|\beta_i\left(\frac{n}{2}-1\right)| \leq \frac{1}{n}$ is similar to the case $k=\frac{n}{2}$.
\end{proof}

\begin{reptheorem}{case n/2}(Case $k=\frac{n}{2}-1$)
$\\$
For the case $k=\frac{n}{2}-1$ we have that
$$4||P_0^{*t} - \pi_n||^2_{T.V.} \leq   \frac{\pi^2}{3 n^{2t-4}}$$
\end{reptheorem}

\begin{proof}
$\\$
Since $|\beta_i| \leq \frac{1}{n}$ and 
$$|\beta_i|^{2t}  \left({2n \choose i} - {2n \choose i-1} \right) \leq 2 \frac{n^2}{(n-i+1)^2}
|\beta_i|^{2t-2},$$
we may imitate the proof of Theorem \ref{case n/2} for the case $k=n/2$ to get that
$$4||P_0^{*t} - \pi_n||^2_{T.V.} \leq \sum^n_{i=1} \frac{1}{n^{2t-2}} \frac{2n^2}{(n-i+1)^2}
\leq  \frac{\pi^2}{3 n^{2t-4}}$$
\end{proof}

The proofs for the cases $k=\frac{n}{2}$ and $k= \frac{n}{2}-1$ generalise to the proof of the following theorem regarding cases when $k$ is close to $\frac{n}{2}$.

\begin{reptheorem}{case n/2}(Case $k=\frac{n}{2}-c$, with $ 2 \leq c < 1+ \log_6 n$)

If $n$ is even and $k=\frac{n}{2} - c$, where $2 \leq c < 1+ \log_6 n$  then
$$||P_0^{*t} - \pi_n||_{T.V.} \leq 12 \pi^2 n^2 \left(\frac{6^{c-1} }{n} \right)^{2t-2}$$
\end{reptheorem}

\begin{proof}
$\\$
We proceed by induction on $c$.  As shown above for $c=0$ or $c=1$ the eigenvalues satisfy the relations
$$|\beta_k(i)| \leq \frac{1}{n}$$ and $$|\beta_k(i)|^{2t}  \left({2n \choose i} - {2n \choose i-1} \right) \leq 2 \frac{n^2}{(n-i+1)^2}
|\beta_k(i)|^{2t-2}.$$

Now assume that there is some $A>0$ such that for $k=\frac{n}{2} -c$ or $k= \frac{n}{2} -(c+1)$
$$|b_k(i)| \leq \frac{A}{n}$$ and 
$$|\beta_k(i)|^{2t}  \left({2n \choose i} - {2n \choose i-1} \right) \leq 2 B\frac{n^2}{(n-i+1)^2}
|\beta_k(i)|^{2t-2} $$
We will show by induction that $$|b_{\frac{n}{2} -(c+2)}| \leq \frac{6A}{n}$$ and 
$$|\beta_{\frac{n}{2} -(c+2)}(i)|^{2t}  \left({2n \choose i} - {2n \choose i-1} \right) \leq 36 B\frac{2n^2}{(n-i+1)^2}
|\beta_{\frac{n}{2} -(c+2)}(i)|^{2t-2} .$$

For the inductive step, the recurrence relation between dual Hahn polynomials (Equation \ref{recurrence}) gives us that
 
 $$\beta_{k-2}(i)= \frac{i+(k-1)^2+ (k-1-n)^2}{(k-1)^2} \beta_{k-1}(i) - \frac{(k-1-n)^2}{(k-1)^2} \beta_k(i) $$
 
So for $k= \frac{n}{2} - c$ where $0< c \leq \log_6 n$ we have that 

\begin{equation}\label{1}
\frac{i+(k-1)^2+ (k-1-n)^2}{(k-1)^2} \leq 4
\end{equation}
and 
\begin{equation}\label{2}
\frac{(k-1-n)^2}{(k-1)^2} \leq 2
\end{equation}
Notice that for (\ref{2}) it is crucial that $k$ is of order $n$. Finally,
In total,
$$|\beta_{\frac{n}{2}-c-2}(i)| \leq 6 \max \{|\beta_{\frac{n}{2}-c-1}(i)|, |\beta_{\frac{n}{2}-c}(i)| \}$$
which implies that
$$|\beta_{\frac{n}{2}-c-2}(i)| \leq \frac{6A}{n} $$
and
$$|\beta_{\frac{n}{2} -(c+2)}(i)|^{2t}  \left({2n \choose i} - {2n \choose i-1} \right) \leq 36 \frac{2n^2}{(n-i+1)^2}
|\beta_{\frac{n}{2} -(c+2)}(i)|^{2t-2}.$$
Specifically, $|\beta_{\frac{n}{2}-c}(i)| \leq \frac{6^{c-1}}{n}$ if $2 \leq c < 1+ \log_6 n$.
The remainder of the proof is similar to the proof of Theorem \ref{case n/2} for $k=n/2$. 

\end{proof}

\subsection{A coupling bound}\label{sec:coupling2}

In this subsection, we introduce a coupling argument which will lead us to an upper bound for general $k$.
We are still considering the walk on two urns, each containing $n$ balls, from a set of $n$ balls from each of two colours. At each step, we exchange randomly chosen sets of $k$ balls from each urn.
\paragraph{The coupling}
$\\$
Consider two instances of this chain, the first in state $S_a$ and the second in state $S_b$. Without loss of generality, let $a > b$.

We couple the two chains as follows. In each chain, label the balls in the left urn by the numbers $1, 2, \dots, n$, and the balls in the right urn by the numbers $n+1, n+2, \dots, 2n$, so that any red ball has a lower number than any white ball in the same urn.

That is, the contents of the urns are as follows:

\begin{tabular}{llllll}
Chain & Urn & \# Red & \# White & Red labels & White labels \\
1 & Left & $a$ & $n-a$ & 1 to $a$ & $(a+1)$ to $n$ \\
1 & Right & $n-a$ & $a$ & $n+1$ to $2n-a$ & $(2n-a+1)$ to $2n$ \\
2 & Left & $b$ & $n-b$ & 1 to $b$ & $(b+1)$ to $n$ \\
2 & Right & $n-b$ & $b$ & $n+1$ to $2n-b$ & $(2n-b+1)$ to $2n$ \\
\end{tabular}

\begin{figure}[ht!]
\centering
\includegraphics[scale=0.7]{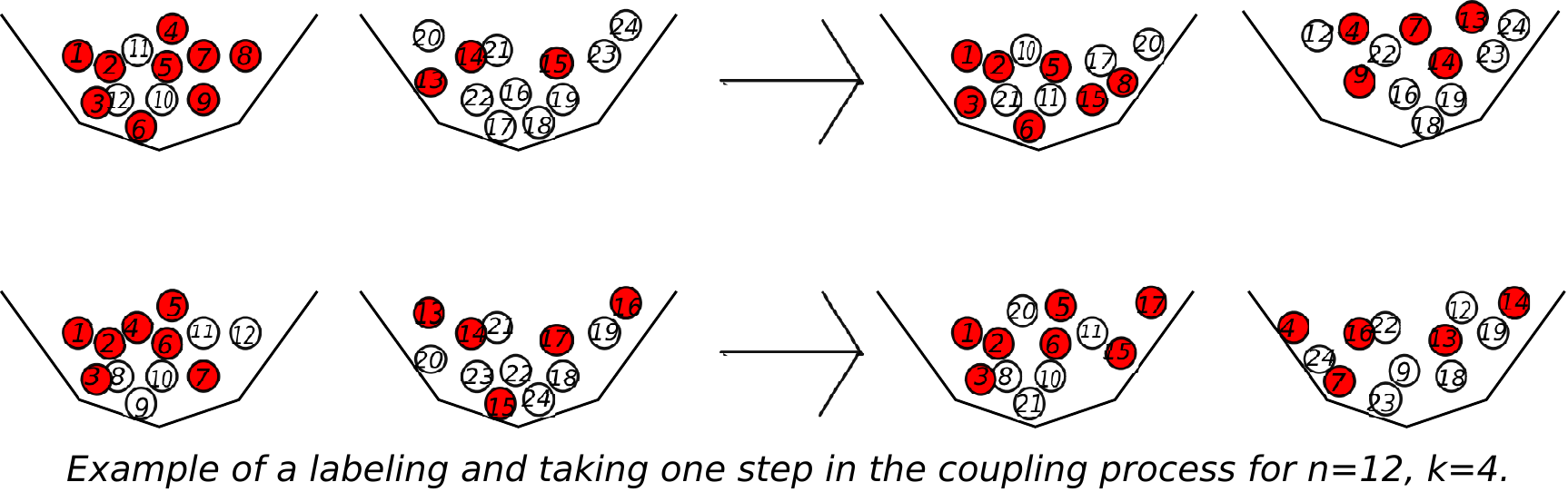}
\\
\end{figure}

Now, choose $A$ to be a $k$-element subset of $\{1,2,\dots,n\}$, and  $B$ to be a $k$-element subset of $\{n+1,n+2,\dots,2n\}$, chosen uniformly at random. In each chain, simultaneously move the balls with labels in $A$ from the left urn to the right urn, and move the balls with labels in $B$ from the right urn to the left urn. Then remove all labels.

Restricted to either of the chains, this is the process as described initially, because any 
$k$-element subsets of the balls in the left and right urns can be chosen, with uniform probability. In later sections, we will refer to this coupling as $C$. We will now analyse the impact of the coupling $C$.

Consider our two chains, initially in states $S_a$ and $S_b$, with $a > b$. Let $c = a-b$ be difference between the number of red balls in the left urn in the two chains.

For each element of the set $A$, a ball is moved from the left urn to the right urn in each chain. If these balls are both red or both white, then there is no change to $c$ --- in each chain, the change to the number of red balls in the left urn is the same. Our coupling does not allow a white ball to be moved in the first chain and a red ball in the second chain. The only remaining possibility is that a red ball is moved in the first chain and a white in the second. This decreases $c$ by one.

Likewise, moving balls from the right urn to the left urn either leaves $c$ unchanged (if the two balls are the same colour), or decreases $c$ by one (if a white ball is moved in the first chain and a red in the second).

The maximum amount by which $c$ can be decreased is $2(a-b)$, when all of the mismatched balls are moved between urns. That is, when $A$ includes each integer between $b+1$ and $a$ and $B$ includes each integer between $n+b+1$ and $n+a$.

Therefore, after our exchange of $k$ balls from each urn, the difference between the number of red balls in the left urn of each chain is between $c$ and $-c$. Without loss of generality, assume that this difference is nonnegative (else we swap the two colours).

Thus, the distance between the states of the two chains does not increase.

\paragraph{Path Coupling}$\\$
We will now present a path coupling argument, a technique introduced by Bubley and Dyer in \cite{BubleyDyer}. In order to use path coupling, we need to specify a graph structure on $\S$, the set of states of the system, and specify lengths for the edges of this graph. For now, let us say that each $S_a$ is adjacent to $S_{a-1}$ and to $S_{a+1}$, with these edges having length $1$. This metric is the same as that defined earlier, $d(S_a,S_b) = |a-b|$.

The path coupling technique requires us to consider a pair of systems in any two adjacent 
states, $S_a$ and $S_{a+1}$. If we can find a coupling $C$ between the two systems and a 
constant $\alpha > 0$ so that when the systems take one step according to the coupling 
$C$, the expected distance between the two resulting states is at most 
$e^{-\alpha}d(S_a,S_{a+1})$, then the mixing time is bounded by 
\begin{equation}\label{mix}
t_{\text{mix}}(\epsilon) \leq 
\frac{-\log(\epsilon) + \log(\diam(S))}{\alpha}
\end{equation}
Equation \ref{mix} will lead to the proof of Theorem \ref{general two piles upper bound}:
\begin{proof}
Using the coupling $C$ defined in the previous section, let us calculate the expected distance between the two chains after one step according to $C$.

Consider the difference $c$ between the number of red balls in the left urn between the two chains. Initially, $c=1$. There is a probability of $\frac{k}{n}$ that $c$ decreases by one due to a mismatched pair of balls being moved from the left urn to the right, and an independent probability of $\frac{k}{n}$ that $c$ decreases by one due to a mismatched pair of balls being moved from the right urn to the left. (There is only one mismatched pair on each side, and there is a probability of $\frac{k}{n}$ that the appropriate label is chosen to be included in $A$ or $B$).

If $c$ decreases by two, then we relabel the colours, and so the distance between the two chains remains one. If $c$ decreases by exactly one, then the two chains are now in the same state. If $c$ does not decrease at all, then the distance remains one.

The following table displays the possible outcomes.

\begin{tabular}{lll}
Change in $c$ & Probability & Final distance \\
$0$ & $\left(\frac{n-k}{n}\right)^2$ & 1 \\
$-1$ & $2\left(\frac{k}{n}\right)\left(\frac{n-k}{n}\right)$ & 0 \\
$-2$ & $\left(\frac{k}{n}\right)^2$ & 1 \\
\end{tabular}

Thus, if $(X_t)_{t}, (Y_t)_t$ are two copies of the Markov chain that are in adjacent sites at time $t-1$, the expected distance between the two chains after a single step according to the coupling $C$ is 
\begin{equation}\label{pc}
E(d(X_t,Y_t)) = \left(\frac{n-k}{n}\right)^2 + \left(\frac{k}{n}\right)^2= \left( 1- \frac{2k(n-k)}{n^2}\right)
\end{equation}

Therefore, we have that $\alpha =- \log \left( 1- \frac{2k(n-k)}{n^2} \right).$
The diameter of the graph $\S$ is $n$, so Equation \ref{mix} gives that the mixing time is bounded by $$t_{\text{mix}}(\epsilon) \leq \frac{-\log (\epsilon) + \log (n)}{- \log \left( 1- \frac{2k(n-k)}{n^2} \right)} \approx \frac{n^2}{2k(n-k)} \log (n)  -  \frac{n^2}{2k(n-k)} \log (\epsilon)$$

If $\lim_{n \longrightarrow \infty}\frac{k}{n} = 0$, then this bound is asymptotically $$t_{\text{mix}}(\epsilon) \leq \frac{n}{2k}\log \left(\frac{n}{\epsilon} \right)$$
while if $\lim_{n \longrightarrow \infty}\frac{k}{n} = b \leq \frac{n}{2}$, this bound is asymptotically $$t_{\text{mix}}(\epsilon) \leq \frac{1}{2b(1-b)}\log \left(\frac{n}{\epsilon} \right)$$
\end{proof}

\subsection{Lower bounds for the two urn model}
\label{sec:lower}
This section presents the proof of Theorem \ref{lower two piles}.
\begin{proof}
Remember that the definition of the total variation distance is
$$||P-Q||_{T.V.} = \sup_{A} |P(A)-Q(A)|.$$

Any choice of a specific set $A$ will provide a lower bound.  To find a suitable set $A$ and prove Theorem \ref{lower two piles}, consider the normalised dual Hahn polynomial of degree one $f(x)= \sqrt{n-1} (1-\frac{2x}{n}).$
Then, consider $A_{\alpha} = \{ x: |f(x)| \leq \alpha \}.$ A specific choice for $\alpha$
will give the correct lower bound, using the second moment method. 

The orthogonality relations between normalised dual Hahn polynomials, Equation \ref{orthogonality}, give that
if $Z$ is a point chosen uniformly in $X=\{0,1,2,\dots,n\}$ then
$$ \mathbb{E}(f(Z))=0 \mbox{ and } \Var(f(Z))=1$$
Now under the convolution measure, 
$$ E \left( f(Z_t) \right) = \sqrt{n-1} \left( 1- \frac{2k}{n} \right)^{t}$$
because $f$ is an eigenfunction of the Markov chain corresponding to the eigenvalue $1-\frac{2k}{n}.$

Thus, under the convolution measure

$$Var \{ f(Z_t) \} =\frac{n-1}{2n-1} + \frac{(n-1)(2n-2)}{(2n-1)} \left( 1- \frac{2k(2n-1)}{n^2} + \frac{2k(k-1)(2n-1)}{n^2 (n-1)}\right)^t - (n-1) \left( 1-\frac{2k}{n} \right)^{2t}.$$

To justify this, remember that the first three (non-normalised)  eigenfunctions of this Markov chain are:
$$f_0(x)=1, f_1(x)=1-\frac{2x}{n} \mbox{ and }f_2(x)= 1 - \frac{2x(2n-1)}{n^2} +\frac{2x(x-1)(2n-1)}{n^2 (n-1)}.$$
To proceed to the analysis, consider the following cases:

Firstly,  $f^2_1(x)= \frac{1}{2n-1} f_0(x) + \frac{2n-2}{2n-1} f_2(x)$. Combining this and the fact that $f_2$ corresponds to the eigenvalue $1- \frac{2k(2n-1)}{n^2} + \frac{2k(k-1)(2n-1)}{n^2 (n-1)}$ gives the above expression for the variance.

\begin{enumerate}
\item  If $\lim_{n \rightarrow \infty} \frac{k}{n}= d \in (0,\frac{1}{2})$ we can write that for $t=\frac{1}{2} \log_{1-2d} (n) -c$, asymptotically
$$ E \{ f(Z_t)\} = \sqrt{n-1} (1-2d)^t= (1-2d)^c$$
and 
$$\Var \{ f(Z_t) \} \leq \frac{3}{2}$$
 
\item If $\lim_{n \rightarrow \infty} \frac{k}{n}=0$ then for $t$ of the form $\frac{1}{4k} n \log (n) - \frac{cn}{k} $ (where $c>0$) the mean  becomes
$$E \{ f(Z_t ) \}= \exp \left( 2c + O \left( \frac{k\log n}{n} \right) +O \left(\frac{ck}{n} \right)  \right) $$
and the variance for large $n$ becomes 
\begin{align*}
\Var \{ f(Z_t) \} & \approx  \frac{1}{2} + n \left(1-\frac{4k}{n} \right)^t  -(n-1) \left(1- \frac{2k}{n}\right)^{2t} \\
& \leq \frac{1}{2} + n \left(1-\frac{4k}{n} \right)^t  -(n-1) \left(1- \frac{4k}{n}\right)^{t}\\
& \approx \frac{1}{2} + \frac{\exp(4c)}{n}.
\end{align*}
Therefore the variance is uniformly bounded for $0 \leq c < \frac{1}{4} \log n$.
\end{enumerate}
In both cases Chebyshev's inequality will give that for the set $A_{\alpha} = \{ x: |f(x)| \leq \alpha \}$,
$$\pi_n(A_{\alpha} ) \geq 1- \frac{1}{\alpha^2} \mbox{ while } P^{*t}(A_{\alpha}) < \frac{B}{(e^{2c} - \alpha)^2} $$ where B is uniformly bounded when $0 \leq c < \frac{1}{4} \log n$.
Therefore, 

$$||P^{*t} - \pi_n||_{T.V.} \geq 1- \frac{1}{\alpha^2} - \frac{B}{(e^{2c}- \alpha)^2}.$$

Taking $\alpha= \frac{e^{2c}}{2}$ completes the proof.
\end{proof}

\paragraph*{Remarks}
\begin{enumerate}
\item The above proof doesn't hold when $k-\frac{n}{2}$ is $O(1)$. This is because $E(f(Z_t)) \approx 0$ (independently of $t$) and therefore  Chebyshev's inequality would give that
$\pi_n(A_{\alpha})$ is very close to 1. 
\item Our results for the $2$ pile case may seem surprising. In particular, if it takes a constant number of steps with $k=\frac{n}{2}$ to randomise the deck, then one might think that two steps with $k=\frac{n}{4}$ should be about as good as one step with $k=\frac{n}{2}$, and so that a (larger) constant number of steps should suffice for $k=\frac{n}{4}$. The lower bound for $k=\frac{n}{4}$, however, is proportional to $\log n$.

The error in this logic is that two $\frac{n}{4}$--steps are not as good as one $\frac{n}{2}$--step, and indeed, the contribution from repeated $\frac{n}{4}$--steps decreases as more of them are used. This can be seen in the stationary distribution, which in our case is the hypergeometric distribution with mean $\frac{n}{2}$ and standard deviation $\sqrt{\frac{n}{8}}$. Assume we are very close to the stationary measure. This means that  the number of red balls in the right urn is very likely to be in the interval $\left( \frac{n}{2} - \sqrt{\frac{n}{8}}, \frac{n}{2} + \sqrt{\frac{n}{8}}\right)$. Now if we move $\frac{n}{2}$ balls from each urn we are in this interval immediately and we stay there, while when we move $\frac{n}{4}$ balls we need $\log n$ steps to get there. 
\end{enumerate}

\section{Acknowledgements}
We would like to thank Persi Diaconis for suggesting this problem, as well as for many helpful discussions and comments.

\bibliographystyle{plain}
\bibliography{Urns}

\end{document}